\numberwithin{equation}{section}
\newtheorem{theorem}{Theorem}[section]
\newtheorem{lemma}[theorem]{Lemma}
\newtheorem{proposition}[theorem]{Proposition}
\newtheorem{corollary}[theorem]{Corollary}
\theoremstyle{definition}
\newtheorem{example}{Example}[section]
\newtheorem{algorithm}{Algorithm}[section]
\theoremstyle{remark}
\newtheorem{remark}{Remark}[section]
\newcommand{\R}{\operatorname{\mathbb{R}}}
\newcommand{\Q}{\operatorname{\mathbb{Q}}}
\newcommand{\C}{\operatorname{\mathbb{C}}}
\newcommand{\I}{\operatorname{\sqrt{-1}}}
\newcommand{\cI}{\operatorname{{\mathcal I}}}
\newcommand{\cV}{\operatorname{{\mathcal V}}}
\newcommand{\rank}{\mathop\mathrm{rank}}
\newcommand{\Rep}{\mathop\mathrm{Re}}
\newcommand{\Imp}{\mathop\mathrm{Im}}
\newcommand{\Proj}{\mathop\mathrm{Proj}}
\def\<{\operatorname{\langle}}
\def\>{\operatorname{\rangle}}
\def\ol{\overline}
\newcommand{\N}{\operatorname{\mathbb{N}}}
\newcommand{\cP}{\mathcal{P}}
\newcommand{\cD}{\mathcal{D}}
\renewcommand{\phi}{\varphi}
\newcommand{\bX}{\bar{X}}
\newcommand{\bp}{\bar{p}}
\newcommand{\bphi}{\bar{\phi}}
\begin{document}

\baselineskip 16pt

\title{
Real ideal and the duality of semidefinite programming for 
polynomial optimization}

\author{Yoshiyuki Sekiguchi}
\address[Y. Sekiguchi]{Faculty of Marine Technology, 
\newline\hphantom{iii}Tokyo University of Marine Science and Technology,
\newline\hphantom{iii}  2-1-6 Etchu-jima, Koto-ku, Tokyo, 
\newline\hphantom{iii}135-8533, Japan}
\email{yoshi-s@kaiyodai.ac.jp}

\author{Tomoyuki Takenawa}

\address[T. Takenawa]{Faculty of Marine Technology, 
\newline\hphantom{iii}Tokyo University of Marine Science and Technology,
\newline\hphantom{iii}  2-1-6 Etchu-jima, Koto-ku, Tokyo, 
\newline\hphantom{iii}135-8533, Japan
}

\email{takenawa@kaiyodai.ac.jp}

\author{Hayato Waki}

\address[H. Waki]{Department of Computer Science,
\newline\hphantom{iii}The University of Electro-Communications,
\newline\hphantom{iii}1-5-1 Chofugaoka, Chofu-shi, Tokyo, 
\newline\hphantom{iii}182-8585 Japan}
\email{hayato.waki@jsb.cs.uec.ac.jp}

\keywords{real ideal, polynomial optimization, 
semidefinite programming relaxation}

\subjclass[2000]{Primary: 14P10 Secondary: 12D15, 14P05, 90C22}

\maketitle

\begin{abstract}
We study 
the ideal generated by polynomials vanishing
on a semialgebraic set and
propose an
algorithm to calculate the generators, which is based on 
some techniques of the cylindrical algebraic decomposition.
By applying these,  
polynomial optimization problems with polynomial equality 
constraints can be modified equivalently
so that the associated semidefinite programming relaxation problems 
have no duality gap. Elementary proofs for some 
criteria on reality of ideals are also given.
\end{abstract}

\section{Introduction}
Polynomial Optimization Problem (POP) is a problem for minimizing a
polynomial objective function over a basic closed 
semialgebraic set defined by 
polynomial inequalities and equalities: 
\begin{align}
 \text{minimize } & f(x)\nonumber\\
 \text{subject to } & g_i(x)\geq0,\ i=1,\ldots,\ell;\label{pop}\\
 & h_j(x)=0,\ j=1,\ldots,m,\nonumber
\end{align}
where and $f,g_i,h_j$ are real polynomial functions of $x\in \R^n$.
POP represents various kinds of optimization problems
and can be solved efficiently under moderate assumptions by SemiDefinite Programming (SDP)
relaxations developed by several authors, in particular Lasserre \cite{Lasserre01}
and Parrilo \cite{Parrilo00, Parrilo03}; 
see, for recent developments with equality constraints \cite{Laurent08,
Vo08}
and references therein.

Let $\R[x]$ denote the polynomial ring $\R[x_1,\ldots,x_n]$
and $\R[x]_k$ be the set of polynomials 
with degree up to $k$.
The method constructs sequences
$\{\cP_k\}$ of optimization problems and their dual problems $\{\cD_k\}$ 
from POP (\ref{pop});
\begin{align*}
(\cP_k)\quad \text{minimize } & L(f)\\
 \text{subject to }  & L\colon \R[x]_k\to \R, \text{ linear};\\
 & L(1)=1, L(M_k)\subset [0,\infty)\\
(\cD_k)\quad \text{maximize } & q\\
 \text{subject to } & f-q\in M_k,\ q\in \R,
\end{align*}
where $k$ is an integer greater than or equal to
$k_0:=\max\{\lceil\frac{\deg g_j}{2}\rceil, \lceil\frac{\deg
h_i}{2}\rceil, \deg(f)\}$, and $M_k$ is defined from the constraint
system of POP (\ref{pop}):
\[
 M_k=\left\{\left.\sum_{i=0}^{\ell}\sigma_i g_i+\sum_{j=1}^mr_jh_j\right|
 \sigma_i\in \sum \R[x]^2,
 r_j\in \R[x], \deg(\sigma_ig_i)\leq k, \deg(r_jh_j)\leq k\right\}.
\]
Here $g_0(x) = 1$ and $\Sigma\R[x]^2$ is the set of the sum of square
polynomials. The union $M$ of all $M_k$ is called the quadratic module
generated by $g_1, \ldots, g_{\ell}$ and $h_1, \ldots, h_m$. 
Let $f_k^*$
and $q_k^*$ be the optimal values of $\cP_k$ and $\cD_k$,
respectively. Lasserre \cite{Lasserre01} formulated them as SDP problems, 
and showed that the sequences $\{f_k^*\}_{k\geq k_0}$
and $\{q_k^*\}_{k\geq k_0}$ converge to the optimal value of the given POP
under moderate assumptions.
In addition, he showed that if the feasible region has nonempty interior (no
equality constraints), the equality $f_k^*=q_k^*$ holds for any $k\ge
k_0$; SDP has no duality gap.
On the other hand, Marshall \cite{Marshall03} focused on the quadratic
module $M$ from POP (\ref{pop}) instead of the basic semialgebraic set,
and then proved that
$f_k^*=q_k^*$  if
$\cI(K)\subset M$ holds,where $K=\{x\in \R^n\mid g_i(x)\geq0,
i=1,\ldots,\ell, h_j(x)=0,j=1,\ldots,m\}$; the feasible region of POP
(\ref{pop}) and $\cI(K)=\{p\in \R[x]\mid p(x)=0, \forall x\in K\}$; 
the vanishing ideal of $K$. 
However, it is difficult to check this
assumption for a given POP.

In this paper, we study the method through investigations on vanishing
ideals of general semialgebraic sets in $\R^n$.
Let $\cV(I)=\{x\in \R^n\mid p(x)=0, \forall p\in I\}$ for an ideal $I\subset\R[x]$.
When we deal with a polynomial ring over $\C$,
the Hilbert's Nullstellensatz describes a relationship between varieties and ideals.
On the other hand, for an ideal $I$ in $\R[x]$,
the Real Nullstellensatz says that $\cI(\cV(I))=I$ if $I$ is real; 
see Section 3 for the details.
We give elementary proofs for some criteria on reality of ideals 
(Theorem \ref{realcond} and Theorem \ref{condtopdim}).
Then we discuss equivalent conditions for the equality $\cI(S\cap \cV(I))=I$, 
where $S$ is a semialgebraic set (Theorem \ref{main_condition} and \ref{condition}).
Both conditions $\cI(\cV(I))=I$ and $\cI(S\cap \cV(I))=I$ are verifiable
and closely
related 
to duality of SDP (Proposition \ref{duality}).
In addition, we propose an
algorithm to calculate generators of $\cI(K)$, using
some techniques of the Cylindrical Algebraic Decomposition
(CAD) after Collins \cite{BPR06, Mishra93}.
Applying these, one can equivalently modify any POP 
so that associated semidefinite programming relaxation problems 
have no duality gap. 
No duality gap of SDP is an important property theoretically and practically.
For example, it is one of fundamental conditions for convergence
of interior point methods, or it is used to confirm optimality of 
a solution.

This paper is organized as follows: In Section 2, we present a
relationship between the vanishing ideal of $K$ and 
the duality of SDP for POP (\ref{pop}).
In Section 3, elementary proofs are given for some criteria on reality of
ideals and equivalent conditions are obtained
for the equality $\cI(S\cap \cV(I))=I$.
Algorithms for deciding reality of ideals and for calculating 
generators of $\cI(S\cap \cV(I))$ are given
in Section 4.

\section{Duality}
We discuss the duality of SDP relaxation problems for POP (\ref{pop}).
We rewrite POP (\ref{pop}) as follows.
\begin{equation}
  \text{minimize } f(x)
 \text{ subject to } x \in K:=S\cap \cV(I),\label{pop2}
\end{equation}
where $S = \{x\in\R^n\mid g_j(x)\ge 0, j=1,\ldots, \ell\}$ and  
 $I=\langle h_1,\ldots,h_m\rangle$ is the ideal generated by $h_1,\ldots,h_m$ in $\R[x]$.
The following proposition is a sufficient condition for the duality. 
\begin{proposition}
\label{duality}
Suppose that $\cI(K) = I$. 
Then $f_k^*=q_k^*$.
Moreover if there is a feasible point in $(\cD_k)$,
it has an optimal solution.
\end{proposition}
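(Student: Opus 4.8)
The plan is to prove weak duality directly, set aside the degenerate case $K=\emptyset$, and reduce everything else to the single statement that $M_k$ is a \emph{closed} subset of the finite-dimensional space $\R[x]_k$ — which is precisely where the hypothesis $\cI(K)=I$ enters.

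\emph{Weak duality and reduction.} If $L$ is feasible for $(\cP_k)$ and $q$ for $(\cD_k)$, then $f-q\in M_k$, so $0\le L(f-q)=L(f)-q$ and hence $q_k^*\le f_k^*$. If $K=\emptyset$, then $\cI(K)=\R[x]=I$ and one checks that (for $k$ large) $f_k^*=q_k^*=+\infty$, the ``moreover'' being vacuous. Assume $K\ne\emptyset$; then the evaluation $p\mapsto p(a)$ at any $a\in K$ is feasible for $(\cP_k)$, so $f_k^*\le\min_K f<\infty$, and if $f_k^*=-\infty$ then weak duality gives $q_k^*=-\infty$ and $(\cD_k)$ is infeasible, so again nothing is left to prove. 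It thus remains to treat $f_k^*$ finite; here it suffices to show $f-f_k^*\in M_k$, for then $q:=f_k^*$ is feasible for $(\cD_k)$, so $q_k^*\ge f_k^*$, hence $q_k^*=f_k^*$ with $q$ optimal.

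\emph{Reduction to closedness of $M_k$.} A standard separation argument gives $f-f_k^*\in\overline{M_k}$: if not, separate it from the closed convex cone $\overline{M_k}$ by a functional $L_0$; since $1\in M_k$, $L_0(1)\ge0$, and either rescaling $L_0$ (if $L_0(1)>0$) or adding a large multiple of $L_0$ to an evaluation functional (if $L_0(1)=0$) produces a point feasible for $(\cP_k)$ with value below $f_k^*$, or a sequence of such points with value tending to $-\infty$ — a contradiction either way. So everything hinges on $\overline{M_k}=M_k$. I would write $M_k=\Sigma_k+L_k$, with $L_k=\{\sum_jr_jh_j:\deg(r_jh_j)\le k\}$ a linear subspace contained in $I=\cI(K)$, and $\Sigma_k$ built from the closed cones $\{\sigma g_i:\sigma\in\Sigma\R[x]^2,\ \deg(\sigma g_i)\le k\}$, and note $M_k\subseteq\{p\in\R[x]_k:p|_K\ge0\}$. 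The mechanism supplied by $\cI(K)=I$ is: if an element $\sum_i\sigma_ig_i+l$ of $M_k$ vanishes on $K$, then $l$ vanishes on $K$, hence so does $\sum_i\sigma_ig_i$, and since each summand is nonnegative there (with $g_0=1$) every $\sigma_ig_i$ — and, because $\cI(K)$ is a real ideal, every square piece of $\sigma_0$ — lies in $\cI(K)=I$. This is exactly what is needed to absorb an ``escaping'' direction: in a sequence $v_n=s_n+l_n\to v$ ($s_n\in\Sigma_k$, $l_n\in L_k$) chosen with $\|s_n\|$ minimal, an unbounded $(s_n)$ would, after normalization, yield a nonzero $\bar s\in\Sigma_k$ lying in $L_k\subseteq\cI(K)$, which by the displayed fact may be split off the sum-of-squares part and pushed into $L_k$, contradicting minimality; hence $(s_n)$ is bounded, $s_n\to s\in\Sigma_k$, $l_n\to v-s\in L_k$, and $v\in M_k$. (Equivalently: $\cI(K)=I$ makes the image of $\Sigma_k$ in $\R[x]_k/L_k$ a pointed closed cone.) Thus $f-f_k^*\in\overline{M_k}=M_k$ and we are done.

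\emph{The main obstacle.} The hard step is the closedness $\overline{M_k}=M_k$: the sum of a closed convex cone and a subspace need not be closed, and it is exactly $\cI(K)=I$ that confines the escaping directions of $\Sigma_k$ to ideal directions absorbable into $L_k$. Two points need care. First, making the ``split off $\bar s$'' step rigorous requires controlling the Gram matrices of the $\sigma_i$ along the sequence. Second, $L_k$ is a priori only contained in (and can be strictly smaller than) $I\cap\R[x]_k$ because of high-degree cancellations, so the identification $L_k=I\cap\R[x]_k$ used above needs $k$ large enough relative to the degrees in a Gr\"obner basis of $\langle h_1,\dots,h_m\rangle$. For such $k$ there is also a more structural view: the functional $p\mapsto\int_K p\,d\mu$, for a finite measure $\mu$ on $K$ of unit mass with Zariski-dense support, is nonnegative on $M_k$ and vanishes exactly on $\cI(K)\cap\R[x]_k$, which by the computation above is the lineality space of $M_k$; hence it is a relative-interior (Slater) point of the feasible set of $(\cP_k)$, and this together with the closedness of $M_k$ gives $f_k^*=q_k^*$ and attainment in $(\cD_k)$ by conic strong duality. (The bare equality $f_k^*=q_k^*$ is in any case immediate from Marshall's criterion, since $I\subseteq M$.)
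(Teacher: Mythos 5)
Your reduction of the proposition to closedness of $M_k$ in $\R[x]_k$ (weak duality plus a separating-functional argument) matches the paper's strategy, which cites Schweighofer for exactly this step; and the mechanism you isolate — an element of $M_k$ that lies in $I=\cI(K)$ has, because each $g_i\ge 0$ on $K$, every piece $\sigma_i g_i$ vanishing on $K$, hence every square factor $q_{i\alpha}$ satisfying $q_{i\alpha}g_i\in I$ — is precisely the paper's Lemma~\ref{lemma_assump}. So the high-level plan is right.

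The genuine gap is in the closedness argument itself, and you partly flag it yourself. In the escape-direction sketch you take minimal-norm representations $v_n=s_n+l_n$, normalize an unbounded $(s_n)$ to obtain $\bar s\in\Sigma_k\cap L_k\subseteq\cI(K)$, and then want to ``split off $\bar s$ and push it into $L_k$,'' contradicting minimality. But $\bar s$ is only a \emph{limit direction} of the $s_n$; since $\Sigma_k$ is a cone and not a subspace, there is no guarantee that $s_n-t\bar s\in\Sigma_k$ for any $t>0$, so minimality is not actually violated. This is exactly the step that needs extra structure, and the paper supplies it differently: it parametrizes $M_k$ by the Gram map $\phi$ acting on the \emph{square roots} $q_{i\alpha}$, quotients at that level by the closed subspaces $J_{d_i}=\{s\in\R[x]_{d_i}:sg_i\in I\}$ (where Lemma~\ref{lemma_assump} gives a genuine linear kernel), gets an induced map $\bar\phi$ with $\bar\phi^{-1}(0)=\{0\}$ and $\bar\phi(\lambda q)=\lambda^2\bar\phi(q)$, and then closedness of $M_k/I_k$ follows from compactness of the image $V$ of the unit sphere together with $0\notin V$. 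Working with $q_{i\alpha}$ rather than $\sigma_i$ is what makes ``quotienting out the degenerate directions'' a linear, and hence tractable, operation; your proposal never achieves this.

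Two smaller issues. First, your argument tacitly needs $\Sigma_k$ closed, which is itself a sum-of-closed-cones statement of the same flavor and is not free. Second, you invoke $L_k=I\cap\R[x]_k$ and note it requires $k$ large; but the proposition is asserted for every $k\ge k_0$, and your Slater-point alternative has the same restriction. The paper avoids this by quotienting $M_k$ by $I_k=I\cap\R[x]_k$ directly and never needs $L_k=I_k$ (though, as you sense, the paper's final step ``$\bar p\in M_k/I_k$ hence $p\in M_k$'' does quietly rely on $I_k\subseteq M_k$, a related subtlety worth being aware of).
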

\begin{proof}
If $M_k$ is closed in the Euclidean topology, the similar arguments in
 \cite[Corollary 21]{Sch05}
ensure $f_k^*=q^*_k$ and existence of an optimal solution. Closedness of 
$M_k$ is shown in 
the later paragraphs of this section (Theorem~\ref{closed}).
\end{proof}
It should be noted that Marshall \cite{Marshall03} 
has shown a similar result.
Under the assumption $\cI(K)\subset M$, he showed
closedness of $M_k/(\cI(K)\cap \R[x]_k)$.
Although our assumption is slightly stronger than his, 
it can be verified directly as given below. 
The following theorem gives
one of verifiable conditions for our assumption. 
The proof is given in Section 3
and a decision algorithm for the condition (2) is given in Section 4.

\begin{theorem}
\label{main_condition}
Let $S$ be a semialgebraic set in $\R^n$,
$I$ be an ideal in $\R[x]$ and let $I=I_1\cap \cdots \cap I_k$ be 
the prime decomposition of $I$. The following conditions are equivalent.
\begin{enumerate}
 \item $\cI(S\cap \cV(I))=I$;
 \item For any $t$ ($1\leq t\leq k$), $\dim^{{\rm top}} (S\cap \cV(I_t)) = \dim I_t$
 holds, where $\dim^{{\rm top}}(S\cap \cV(I_t))$ is the topological
 dimension of $S\cap \cV(I_t)$.    
\end{enumerate}
\end{theorem}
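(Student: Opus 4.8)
The plan is to prove the equivalence by reducing everything to the irreducible case and then exploiting the Real Nullstellensatz together with properties of topological dimension. First I would record the basic facts that will be used repeatedly: $\cI(S\cap\cV(I))=\bigcap_t\cI(S\cap\cV(I_t))$ since $\cV(I)=\bigcup_t\cV(I_t)$; that $\cI(S\cap\cV(I_t))\supseteq I_t$ always holds; and that $\cI(S\cap\cV(I_t))$ is itself a real ideal (a vanishing ideal of a real set is always real). Because $I_t$ is prime of dimension $\dim I_t$, it is the vanishing ideal of the Zariski-irreducible real variety $\cV(I_t)$ only when $I_t$ is real, so I expect $\dim^{\mathrm{top}}$ to be compared against $\dim I_t$ via the known fact that for a real variety $V=\cV(P)$ with $P$ prime, $\dim^{\mathrm{top}} V \le \dim P$, with equality exactly when $V$ is Zariski-dense in $\cV(P)$, equivalently when $\cI(V)=P$ (this is where Theorem~\ref{realcond}/Theorem~\ref{condtopdim} from Section~3 enter).

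Second, for the direction $(1)\Rightarrow(2)$: assume $\cI(S\cap\cV(I))=I$. Intersecting the two prime decompositions and using that the $I_t$ are the minimal primes over $I$, I would argue that $\cI(S\cap\cV(I_t))=I_t$ for each $t$ — the point being that each $I_t$ must appear among the minimal primes of $\bigcap_t\cI(S\cap\cV(I_t))$, and a containment-and-dimension count forces the equality termwise. Once $\cI(S\cap\cV(I_t))=I_t$, the set $S\cap\cV(I_t)$ has vanishing ideal exactly the prime $I_t$, so by the dimension criterion (Theorem~\ref{condtopdim}) its topological dimension equals $\dim I_t$.

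Third, for $(2)\Rightarrow(1)$: suppose $\dim^{\mathrm{top}}(S\cap\cV(I_t))=\dim I_t$ for all $t$. Fix $t$. Let $J_t:=\cI(S\cap\cV(I_t))\supseteq I_t$. Then $\cV(J_t)\subseteq\cV(I_t)$ and $\dim^{\mathrm{top}}\cV(J_t)\ge\dim^{\mathrm{top}}(S\cap\cV(I_t))=\dim I_t$. On the other hand any ideal containing the prime $I_t$ either equals $I_t$ or has strictly smaller Krull dimension; combined with the inequality $\dim^{\mathrm{top}}\cV(J_t)\le\dim J_t$ (real dimension does not exceed Krull dimension for a real variety — again Theorem~\ref{condtopdim}), we get $\dim J_t\ge\dim I_t$, hence $J_t=I_t$ by primality and dimension. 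Then $\cI(S\cap\cV(I))=\bigcap_t\cI(S\cap\cV(I_t))=\bigcap_t I_t=I$.

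The main obstacle I anticipate is the termwise reduction when the prime decomposition has redundancy or when $S$ interacts non-uniformly with the different components — specifically, making rigorous that $\bigcap_t\cI(S\cap\cV(I_t))=I$ forces $\cI(S\cap\cV(I_t))=I_t$ for \emph{every} $t$, rather than merely for a sub-collection whose intersection is already $I$. This requires that the decomposition $I=I_1\cap\cdots\cap I_k$ be irredundant (the minimal primes), so that no $I_t$ can be dropped; I would either assume irredundancy from the start (the minimal primes over $I$) or note that removing an embedded/redundant component changes nothing. The other delicate point is citing the correct form of the inequality $\dim^{\mathrm{top}}(S\cap\cV(I_t))\le\dim^{\mathrm{top}}\cV(I_t)\le\dim I_t$ and its equality case; this is exactly the content of the reality criteria proved in Section~3, so the bulk of the real-algebraic work is deferred there, and this proof becomes a clean bookkeeping argument over the prime components.
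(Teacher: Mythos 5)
Your proof is correct in substance, but it takes a genuinely different route from the paper's. For $(1)\Rightarrow(2)$, the paper argues by contradiction with an explicit polynomial construction: if $\dim^{\mathrm{top}}(S\cap\cV(I_t))<\dim I_t$ for some $t$, it takes a polynomial $f_t$ vanishing on $S\cap\cV(I_t)$ but not on $\cV(I_t)$, multiplies by $f_s\in I_s\setminus I_t$ for $s\neq t$, and produces $f=\prod_s f_s\in\cI(S\cap\cV(I))\setminus I$. You instead do abstract minimal-prime bookkeeping on $\bigcap_t J_t=I$ with $J_t:=\cI(S\cap\cV(I_t))\supset I_t$: each $I_u$ is a minimal prime of some $J_{t'}$, and then $I_u\supset I_{t'}$ forces $u=t'$ and $J_u\subset I_u$, hence $J_u=I_u$. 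This is cleaner commutative algebra but less constructive than the paper. For $(2)\Rightarrow(1)$, your dimension sandwich $\dim I_t\le\dim^{\mathrm{top}}\cV(J_t)\le\dim J_t\le\dim I_t$, forcing $J_t=I_t$ by primality of $I_t$, is actually more transparent than the paper's terse instruction to ``replace $S^\circ$ by $S$'' in an earlier proof, which buries a Weil generic-point argument. Two small citation imprecisions you should fix: (a) Theorem~\ref{condtopdim} speaks only about $\dim^{\mathrm{top}}\cV(P)$ for $P$ prime, so the step ``$\cI(S\cap\cV(I_t))=I_t\Rightarrow\dim^{\mathrm{top}}(S\cap\cV(I_t))=\dim I_t$'' really needs the general semialgebraic-dimension identity $\dim^{\mathrm{top}}X=\dim\cI(X)$ (Bochnak--Coste--Roy 2.8.2), which you gesture at in your first paragraph but attribute to the wrong theorem; (b) likewise, $\dim^{\mathrm{top}}\cV(J_t)\le\dim J_t$ for the merely radical ideal $J_t$ is that same general fact, not literally Theorem~\ref{condtopdim}. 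Your flagged concern about irredundancy of the prime decomposition is legitimate and correctly resolved by taking it to mean the irredundant decomposition into minimal primes; the paper's proof silently relies on the same choice, since the existence of $f_s\in I_s\setminus I_t$ requires it.
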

Here the dimension $\dim I$ of an ideal $I$ is defined in Section 3
($\dim \R[x] =\dim^{{\rm top}} \emptyset =-1$).
Theorem~\ref{main_condition} becomes simpler if $S^\circ\cap
\cV(I)$ is nonempty, where $S^\circ$ is the interior of $S$.
The following is a corollary of Theorem 3.13 in Section 3.
\begin{corollary}
\label{duality_of_prime}
 Suppose that $I=\langle h_1,\ldots,h_m\rangle$ is prime.
 If there exists a feasible point $x_0$ for POP $(\ref{pop})$ such that
 $x_0\in S^{\circ}$ and 
the rank of the Jacobian matrix 
$\frac{\partial (h_1,\ldots,h_m)}{\partial (x_1,\ldots,x_n)}(x_0)$ is equal to $n-\dim I$, then $f_k^*=q_k^*$.
\end{corollary}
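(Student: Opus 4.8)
The plan is to reduce the corollary to Theorem~\ref{main_condition} and Proposition~\ref{duality}. Any point $x_0$ feasible for POP~(\ref{pop}) lies in $S\cap\cV(I)=K$, and since $I$ is prime its prime decomposition is trivial ($I=I_1$ with $I_1=I$), so by Theorem~\ref{main_condition} the equality $\cI(K)=\cI(S\cap\cV(I))=I$ holds as soon as $\dim^{{\rm top}}(S\cap\cV(I))=\dim I$; once that is established, Proposition~\ref{duality} yields $f_k^*=q_k^*$. Thus the whole content of the statement is this single dimension equality.

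For the bound $\dim^{{\rm top}}(S\cap\cV(I))\le\dim I$ there is nothing to do: $S\cap\cV(I)$ is contained in the real locus of the complex variety $\cV_{\C}(I)\subseteq\C^n$ of $I$, whose complex dimension is $\dim I$, and the topological dimension of a real semialgebraic set never exceeds that number. For the reverse inequality I would argue locally at $x_0$. Because $x_0\in S^{\circ}$, there is an open ball $U$ with $x_0\in U\subseteq S$. Since the Jacobian $\frac{\partial(h_1,\ldots,h_m)}{\partial(x_1,\ldots,x_n)}(x_0)$ has rank $r:=n-\dim I$, one may select generators $h_{j_1},\ldots,h_{j_r}$ whose differentials at $x_0$ are linearly independent; the implicit function theorem then makes $\cV(h_{j_1},\ldots,h_{j_r})\cap U$ a real-analytic submanifold of $\R^n$ of dimension $\dim I$ near $x_0$ (after shrinking $U$). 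The key point is that near $x_0$ this submanifold coincides with $\cV(I)$: since the rank of the Jacobian of the generators equals the codimension of $\cV_{\C}(I)$ there, $x_0$ is a nonsingular point of $\cV_{\C}(I)$, and, $I$ being prime, $\cV_{\C}(I)$ is irreducible; hence the germ at $x_0$ of $\cV_{\C}(h_{j_1},\ldots,h_{j_r})$ --- smooth of dimension $\dim I$ and containing the irreducible germ of $\cV_{\C}(I)$ of the same dimension --- must equal it, so every remaining $h_i$ vanishes identically near $x_0$ on $\cV(h_{j_1},\ldots,h_{j_r})$ and, on real points, $\cV(h_{j_1},\ldots,h_{j_r})\cap U=\cV(I)\cap U$. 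Therefore $\cV(I)\cap U\subseteq S\cap\cV(I)$ is a manifold of dimension $\dim I$, giving $\dim^{{\rm top}}(S\cap\cV(I))\ge\dim I$. This local analysis is exactly what Theorem~3.13 supplies, so in the write-up one may instead simply invoke it to obtain $\dim^{{\rm top}}(S\cap\cV(I))=\dim I$ (equivalently $\cI(S\cap\cV(I))=I$) and then close with Theorem~\ref{main_condition} and Proposition~\ref{duality}.

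The step I expect to be the main obstacle is the identification, as germs at $x_0$, of $\cV(I)$ with the complete intersection cut out by $h_{j_1},\ldots,h_{j_r}$, i.e.\ ruling out an extra lower-dimensional component of $\cV(I)$ through $x_0$. This is where primality of $I$ together with the full-rank hypothesis are genuinely used (and it is presumably the substance of Theorem~3.13); the topological-dimension bound, the implicit function theorem, and the bookkeeping with $S^{\circ}$ and the trivial prime decomposition are all routine.
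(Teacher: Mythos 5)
Your overall plan is the same as the paper's: the paper proves this corollary simply by noting that the hypothesis is exactly condition~(ii) of Theorem~\ref{condition} (``Theorem~3.13'') with the trivial prime decomposition $I=I_1$, whence $\cI(S^\circ\cap\cV(I))=I$; the sandwich $I\subseteq\cI(K)\subseteq\cI(S^\circ\cap\cV(I))=I$ gives $\cI(K)=I$, and Proposition~\ref{duality} finishes. You recognize this at the end, so the citation route you sketch there is the right one (routing through Theorem~\ref{main_condition} as well is harmless but redundant once Theorem~\ref{condition} has been invoked).

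The independent local argument you offer for $\dim^{{\rm top}}(S\cap\cV(I))\geq\dim I$ does, however, contain a gap. You assert ``$I$ being prime, $\cV_{\C}(I)$ is irreducible,'' but primality of $I$ in $\R[x]$ does not imply that $I'=\C[x]I$ is prime in $\C[x]$: the paper's example $I=\langle x^2+y^2,\,z^2+w^2,\,xz+yw,\,xw-yz\rangle$ is prime in $\R[x]$ while $I'=\langle x+y\I,\,z+w\I\rangle\cap\langle x-y\I,\,z-w\I\rangle$ is not. What actually rules this out in the present situation is the full-rank hypothesis: by Proposition~\ref{notprime}, if $I'$ were not prime then $\dim I+\rank I<n$, so no point of $\cV(I)$ could have Jacobian rank $n-\dim I$, contradicting the existence of $x_0$. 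You do flag at the very end that both primality and the full-rank hypothesis are genuinely used, but the proof body invokes only primality at this particular step, so as written the germ-identification argument is incomplete. With the appeal to Proposition~\ref{notprime} inserted, the rest of the local analysis (implicit function theorem, germ equality, passage to $S^\circ$) is sound and reproduces the content of the paper's proof of the implication (ii)~$\Rightarrow$~(iii) of Theorem~\ref{condition}, which rests on the same ingredients (Theorem~\ref{realcond} and Lemma~\ref{topdim}).
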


\begin{example}
We consider the following POP.
\begin{align}
 \text{minimize } & f(x)\nonumber\\
 \text{subject to } & g_i(x)\geq0,\ i=1,\ldots,{\ell};\label{pop_ex}\\
 & h_j(x):=a_j^Tx -b_j=0,\ j=1,\ldots,m,\nonumber
\end{align}
where $f,g_i\in \R[x]$, $a_j\in\R^n$ and $b_j\in\R$. We assume that
$S^{\circ}\cap \cV(I)$ is nonempty. Then it follows from Corollary
~\ref{duality_of_prime} 
 that SDP relaxation problems for POP (\ref{pop_ex}) have no duality gap. Indeed, it
 is clear that $\langle h_1,\ldots, h_m \rangle$ is prime. In addition,
 $\rank\frac{\partial (h_1,\ldots,h_m)}{\partial (x_1,\ldots,x_n)}
=n-\dim\ker[a_1,\cdots,a_m]=n-\dim I$.
\end{example}

To show closedness of $M_k$, we start with the following technical lemma.
For $d\in \N$, let $\Lambda(d)=\{\alpha\in
\N^n\mid|\alpha|= 
\alpha_1+\cdots+\alpha_n\leq d\}$.  For $k\geq k_0$, let
$d_i=\max\{d\in \N\mid 2d+\deg g_i\leq k\}$, 
$e_i=k-\deg h_i$, 
and $X=\prod_{i=0}^\ell\R[x]_{d_i}^{\Lambda(d_i)}\times \prod_{j=1}^m
 \R[x]_{e_j}$. We define a mapping $\phi\colon X\to M_k$ by
\[
 q=\left((q_{0\alpha})_{\alpha\in \Lambda(d_0)},\ldots,
 (q_{\ell\alpha})_{\alpha\in \Lambda(d_\ell)}, r_1,\ldots,r_m\right)
\mapsto \sum_i\sum_\alpha(q_{i\alpha})^2g_i+\sum_jr_jh_j.
\]
It is known that $\phi$ is surjective
(the Gram matrix description of sums of squares); 
see for instance \cite{Sch05}.

\begin{lemma}
\label{lemma_assump}
Under the same assumption of Proposition~\ref{duality},
$\phi(q)\in I$ if 
and only if $q_{i\alpha}$ belongs to the quotient of ideals $(I\colon
 \langle g_i\rangle)=\{s\in \R[x]\mid sg_i\in I\}$ for all $\alpha\in
 \Lambda(d_i)$ and $i=0,1,\ldots,\ell$. 
\end{lemma}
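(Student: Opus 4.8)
The plan is to analyze when the element $\phi(q)=\sum_i\sum_\alpha q_{i\alpha}^2 g_i+\sum_j r_j h_j$ lands in the ideal $I$, using the hypothesis $\cI(K)=I$, i.e. $I$ is a real radical ideal that coincides with the vanishing ideal of $K=S\cap\cV(I)$. First I would dispose of the easy direction: if each $q_{i\alpha}\in(I:\langle g_i\rangle)$, then $q_{i\alpha}^2 g_i=q_{i\alpha}\cdot(q_{i\alpha}g_i)\in I$ since $q_{i\alpha}g_i\in I$ and $I$ is an ideal, while $r_jh_j\in I$ because $h_j\in I$ by definition of $I=\langle h_1,\dots,h_m\rangle$; summing gives $\phi(q)\in I$. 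This uses nothing beyond that $I$ is an ideal. Note the case $i=0$, $g_0=1$, forces $q_{0\alpha}\in(I:\langle 1\rangle)=I$, which is consistent with the claim.

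For the nontrivial direction, suppose $\phi(q)\in I$. The key point is to evaluate at points of $K$ and use that $I=\cI(K)$. Fix $i$ and $\alpha$; I want to show $q_{i\alpha}g_i\in I=\cI(K)$, i.e. $q_{i\alpha}(x)g_i(x)=0$ for every $x\in K$. Take $x\in K$. Since $h_j(x)=0$ for all $j$ (as $x\in\cV(I)$) and $g_i(x)\ge 0$ for all $i$ (as $x\in S$, with $g_0\equiv 1\ge 0$), evaluating $\phi(q)(x)=0$ gives $\sum_i\sum_\alpha q_{i\alpha}(x)^2 g_i(x)=0$, a sum of nonnegative terms. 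Hence each term vanishes: $q_{i\alpha}(x)^2 g_i(x)=0$ for all $i,\alpha$ and all $x\in K$. Taking square roots, $q_{i\alpha}(x)\sqrt{g_i(x)}=0$ pointwise, but more usefully $q_{i\alpha}(x)^2 g_i(x)=0$ means that at each $x\in K$ either $g_i(x)=0$ or $q_{i\alpha}(x)=0$; in both cases $q_{i\alpha}(x)g_i(x)=0$. Therefore $q_{i\alpha}g_i$ vanishes identically on $K$, so $q_{i\alpha}g_i\in\cI(K)=I$, which is exactly the statement $q_{i\alpha}\in(I:\langle g_i\rangle)$.

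The step I expect to require the most care is the passage from ``$q_{i\alpha}(x)^2 g_i(x)=0$ on $K$'' to ``$q_{i\alpha}g_i$ vanishes on $K$''. The pointwise dichotomy argument above already handles this cleanly without needing to extract square roots of polynomials, so the real content is simply that a finite sum of products $(\text{square})\cdot(\text{nonnegative})$ that vanishes at a point must have every summand vanish there — an elementary fact about real numbers. One should also be slightly careful about the indexing convention: the hypothesis of Proposition~\ref{duality} is $\cI(K)=I$ (not merely $\subset$), which is what licenses replacing membership in $\cI(K)$ by membership in $I$ in the last line; and for $i=0$ the conclusion $q_{0\alpha}\in(I:\langle 1\rangle)=I$ is automatic from $q_{0\alpha}^2\cdot 1$ vanishing on $K$ together with the fact that $I$ is radical (indeed real), so $q_{0\alpha}^2\in I\Rightarrow q_{0\alpha}\in I$. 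Assembling these observations gives the equivalence.
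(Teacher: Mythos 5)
Your proof is correct and follows essentially the same route as the paper: deduce that $\sum_{i,\alpha} q_{i\alpha}^2 g_i$ vanishes on $K$, use nonnegativity of each summand to force $q_{i\alpha}^2 g_i = 0$ (hence $q_{i\alpha}g_i = 0$) pointwise on $K$, and then invoke $\cI(K)=I$. Your extra remark that the $i=0$ case needs $I$ to be radical is unnecessary — the pointwise dichotomy you already gave covers $g_0\equiv 1$ as well — but this does not affect correctness.
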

\begin{proof}
 Suppose $\phi(q)\in I$. Then 
 $\sum_i\sum_\alpha(q_{i\alpha})^2g_i$ belongs to $I$ 
and hence vanishes on $K$.
Since $g_i(x)\geq0$, we have each $q_{i\alpha}^2g_i=0$ on $K$
and thus $q_{i\alpha}g_i=0$. Therefore
 the assumption implies $q_{i\alpha}g_i\in I$. 
The converse is obvious.
\end{proof}

Let $\R[x]_k$ be endowed with Euclidean topology.
The following theorem is a slight modification of 
Theorem 3.1 of Marshall
\cite{Marshall03}.

\begin{theorem}\label{closed}
  Under the same assumption of Proposition~\ref{duality}, $M_k$ is
 closed.
\end{theorem}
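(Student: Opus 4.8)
The plan is to show that $M_k$ is the image under the continuous map $\phi$ of a suitable closed set, after quotienting out by the obstruction identified in Lemma~\ref{lemma_assump}. Concretely, I would first decompose $X = X_0 \oplus X_1$, where $X_1$ collects precisely the components $q_{i\alpha}$ lying in the ideal quotient $(I:\langle g_i\rangle)$ together with the $r_j$-components, i.e.\ the part of $X$ that $\phi$ sends into $I$ by Lemma~\ref{lemma_assump}, and $X_0$ is a complementary (finite-dimensional) linear subspace. Since $\phi(q)\in I$ iff $q\in X_1$ by the lemma, and since $I\cap\R[x]_k$ is a finite-dimensional subspace, the restriction $\bar\phi\colon X_0\to \R[x]_k/(I\cap\R[x]_k)$ induced by $\phi$ has the property that $\bar\phi(q)=0$ only when $q=0$. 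The key point is then that $M_k/(I\cap\R[x]_k)$ equals the image of $\bar\phi$ plus the contributions that already lie in $I$, so it suffices to prove that $\bar\phi(X_0)$ is closed in the finite-dimensional quotient space.

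Next I would invoke the standard closedness criterion for images of proper-type polynomial maps: a continuous map between finite-dimensional spaces that is proper has closed image. So the heart of the argument is to show that $\bar\phi$ restricted to $X_0$ is proper, i.e.\ that if $\|q^{(\nu)}\|\to\infty$ in $X_0$ then $\|\bar\phi(q^{(\nu)})\|\to\infty$ as well. Suppose not; then after normalizing, $q^{(\nu)}/\|q^{(\nu)}\|$ converges (by compactness of the sphere in the finite-dimensional $X_0$) to some unit vector $\tilde q\in X_0$, and one checks that $\bar\phi$ is a sum of a quadratic part (coming from the $\sum_i\sum_\alpha q_{i\alpha}^2 g_i$ terms) and a linear part (the $\sum_j r_j h_j$ terms), so that the top-degree-in-$\|q\|$ behavior of $\bar\phi(q^{(\nu)})$ forces the image of the quadratic form evaluated at $\tilde q$ to vanish modulo $I$. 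That means $\sum_i\sum_\alpha \tilde q_{i\alpha}^2 g_i \in I$, and by exactly the argument in the proof of Lemma~\ref{lemma_assump} (each summand is nonnegative on $K$ and hence each $\tilde q_{i\alpha}g_i$ vanishes on $K$, so $\tilde q_{i\alpha}\in(I:\langle g_i\rangle)$), we conclude $\tilde q\in X_1$; combined with $\tilde q\in X_0$ this gives $\tilde q=0$, contradicting $\|\tilde q\|=1$.

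I expect the main obstacle to be the bookkeeping in the normalization/homogenization step: because $\bar\phi$ mixes a degree-two part in the $q$-variables with a degree-one part in the $r$-variables, a single scalar normalization $\|q^{(\nu)}\|\to\infty$ does not immediately isolate the quadratic leading term, so one must argue carefully — possibly by separately rescaling the $q$-block and the $r$-block, or by observing that if the $q$-block stays bounded then $M_k$-membership reduces to the (already-closed) linear image $\sum_j r_j h_j + I \subset \R[x]_k/(I\cap\R[x]_k)$ of a finite-dimensional space, while if the $q$-block is unbounded the quadratic term dominates. The rest — continuity of $\phi$, surjectivity onto $M_k$ (already cited), finite-dimensionality of $I\cap\R[x]_k$, and the reuse of the Lemma~\ref{lemma_assump} computation on the limiting vector — is routine. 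This is essentially Marshall's argument from \cite[Theorem 3.1]{Marshall03}, adapted to our slightly stronger hypothesis $\cI(K)=I$, which is what lets us identify the kernel-type obstruction cleanly via the ideal quotients $(I:\langle g_i\rangle)$.
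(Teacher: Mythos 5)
Your plan is correct and is essentially the paper's proof (which, as you note, adapts Marshall's argument): identify the ``kernel'' part of $X$ via Lemma~\ref{lemma_assump}, pass to a complement or quotient, and then combine degree-two homogeneity with compactness of the unit sphere. Where you go slightly off track is the obstacle you single out in your third paragraph. By your own construction in the first paragraph, $X_1$ absorbs \emph{all} of the $r_j$-components, since $r_j h_j$ always lies in $I$ and hence maps to $0$ in $\R[x]_k/(I\cap\R[x]_k)$. Consequently $X_0$ contains no $r$-variables at all, and $\bar\phi|_{X_0}$ is a \emph{purely} degree-two homogeneous map; there is no ``degree-one part in the $r$-variables'' to compete with the quadratic part, and no block-by-block rescaling or case split is needed. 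Once this is noticed, properness is immediate: $\bar\phi$ is continuous and, by Lemma~\ref{lemma_assump}, nonvanishing on the compact unit sphere $S_0\subset X_0$, so $\|\bar\phi\|\geq\varepsilon>0$ on $S_0$, whence $\|\bar\phi(q)\|=\|q\|^2\,\|\bar\phi(q/\|q\|)\|\geq\varepsilon\|q\|^2\to\infty$ as $\|q\|\to\infty$. The paper runs the same argument in quotient-space language, setting $\bX_{i\alpha}=\R[x]_{d_i}/J_{d_i}$ with $J_{d_i}=(I\colon\langle g_i\rangle)\cap\R[x]_{d_i}$, observing that the induced $\bphi$ is degree-two homogeneous with $\bphi^{-1}(0)=\{0\}$, and then writing $M_k/I_k$ as the cone over the compact image $V$ of the unit sphere and passing to limits of $\lambda_s v_s$; that cone description is just a repackaging of your properness claim.
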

\begin{proof}
 Let $J_{d_i}=\{s\in \R[x]_{d_i}\mid sg_i\in I\}$ and
 $I_{e_j}=I\cap \R[x]_{e_j}$.
Then $J_{d_i}$ and $I_{e_j}$ are closed subspaces 
of vector spaces $\R[x]_{d_i}$ and $\R[x]_{e_j}$ respectively. 
We define $\bX=\prod_{i,\alpha}\bX_{i\alpha}\times \prod_j\bX_j$, 
where $\bX_{i\alpha}=\R[x]_{d_i}/J_{d_i}$ for $\alpha\in \Lambda(d_i)$, 
$i=0,1,\ldots,\ell$  and $\bX_j=\R[x]_{e_j}/I_{e_j}$ for $j=1,\ldots,m$.
Then $\bX$ is a normed space.

Let $\bphi\colon \bX\to M_k/I_k$ be the induced mapping by $\phi$.
Then we have $\bphi$ is surjective, $\bphi(\lambda q)=\lambda^2\bphi(q)$
 for $\lambda\in \R$, 
and $\bphi^{-1}(0)=\{0\}$ by Lemma~\ref{lemma_assump}.
Hence 
we have $M_k/I_k=\{\lambda v\mid \lambda\in [0,\infty),v\in V\}$, where
$V$ is the image of the unit sphere in $\bX$ under $\bphi$.
In addition, $V$ is compact and does not contain zero element.
Now we suppose $p_s\in M_k$ and $p_s\to p$ in $\R[x]_k$. 
Let $\bp_s$ and $\bp$ be the cosets of $p_s$ and $p$ respectively.
Then there exist $\lambda_s\geq 0$ and $v_s\in V$ 
such that $\bp_s=\lambda_sv_s$.
By compactness of $V$, we may assume $v_s$ converges to some element
 $v\in V$. Then the limit of $\lambda_s$ exists, since
 $\lambda_s=\|\lambda_sv_s\|/\|v_s\|$ converges to $\|\bp\|/\|v\|$. 
Therefore we have $\bp=\lim_s \lambda_sv_s=\|\bp\|/\|v\|v\in M_k/I_k$
and hence $p\in M_k$.

\end{proof}

\section{Real ideals and semialgebraic sets}

The first main result of this section is 
Theorem~\ref{realcond}, which is called the simple point criterion
for reality of ideals
and has already been proved\footnote{
This assertion is also written 
as Proposition~3.3.16 in \cite{BCR98}, but the proof has
a gap.} by M. Marshall \cite{Marshall08}. 
In this section we give an elementary proof of this theorem
and introduce another equivalent condition (Theorem~\ref{condtopdim}). 
Using these, we also obtain
some conditions equivalent to $\cI(\cV(I)\cap S^\circ)=I$
or to $\cI(S\cap \cV(I))=I$ for a semialgebraic set $S$
(Theorem~\ref{condition} and Theorem~\ref{main_condition}).

Let $x=(x_1,\dots,x_n)$ and $k=\R$ or $\C$. For an ideal 
$I$ of $k[x]$, $\cV(I)$ denotes the 
set of all zero points of $I$ in $k^n$. 
For a subset $V$ of $k^n$, $\cI(V)$ denotes the set of all polynomials 
witch vanish on $V$. 
A semialgebraic set in $\R^n$ is a subset of the form
\begin{align}\label{semialgebraic}
\bigcup_{i=1}^s \bigcap_{j=1}^{r_i} \{x\in \R^n ~;~ g_{i,j} *_{i,j} 0\},
\end{align}
where $f_{i,j} \in \R[x]$ and $*_{i,j}$ is either $>$, $\geq$ or $=$.

For an ideal $I$ of $k[x]$, the dimension of $I$, $\dim I$, 
is the transcendence degree of $I$.
If $I$ is $k[x]$ itself, $\dim k[x]$ is defined as $-1$.
If $I$ is prime, $\dim I$ coincides with the depth of $I$
(Krull dimension of $k[x]/I$). 
For an ideal $I$ of
$k[x]$ with primary decomposition $I=I_1\cap \dots\cap I_k$, 
$\dim I$ is $\max_{t=1,\dots,k} \dim \sqrt{I_t}$.
The dimension does not depend on extensions of the coefficient field.   
See \cite{Marshall08,Weil62} for more details. Finally, the rank
of an ideal $I=\<f_1,\dots,f_m\>$ is the maximal rank of the Jacobian
$\frac{\partial (f_1,\ldots,f_m)}{\partial (x_1,\ldots,x_n)}$
in $\cV(I)$. The topological dimension $\dim^{\mathrm{top}}\cV(I)$ of $\cV(I)$
is the maximal dimension as manifolds.

For a polynomial $f(x)=\sum_a f_a x^a \in \C[x]$, 
$\ol{f}$ or $\ol{f}(x)$ denotes $\sum_a \ol{f_a} x^a$. 
Note that $\ol{f}(x) \neq \ol{f(x)}$ for $x \in \C^n$.

\begin{remark}
The results and the proofs of this section are still valid if $\R$ and $\C$ are
replaced by real closed field and its extension with $\sqrt{-1}$ respectively.
\end{remark}

An ideal $I$ in $\R[x]$ is called real if the following equivalent conditions
are satisfied.
\begin{enumerate}
\item $\cI(\cV(I))=I$;
\item For any integer $m$ and any $f_1,\dots,f_m \in \R[x]$, 
 the equation $\sum_{i=1}^m f_i^2\in I$ implies $f_i\in I$ for all $i$.
\end{enumerate}

We give the equivalent condition for an ideal to be real. 
The following proposition is well known (see Lemma~2.5 of \cite{Efroymson74}
for example).

\begin{proposition}\label{decomposition}
Let $I=I_1\cap\cdots \cap I_k$ be a primary decomposition of 
an ideal $I\subset \R[x]$. Then $I$ is real if and only if
each $I_t$ is prime and real. 
\end{proposition}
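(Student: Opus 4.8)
The plan is to prove the two implications separately, with the ``if'' direction being routine and the ``only if'' direction carrying the real content. For ``if'', suppose each $I_t$ is real. If $\sum_{i=1}^{m} f_i^2 \in I$, then $\sum_{i=1}^{m} f_i^2 \in I_t$ for every $t$, so reality of $I_t$ gives $f_i \in I_t$ for all $i$ and all $t$, whence $f_i \in \bigcap_t I_t = I$. Thus $I$ is real; note that primeness of the $I_t$ is not used in this direction.

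For ``only if'', assume $I$ is real. First I would record two standard facts: a real ideal is radical (immediate from $\cI(\cV(I)) = I$, since any ideal of the form $\cI(V)$ is radical), and a primary ideal which is radical is prime. Taking the primary decomposition to be irredundant, these facts show that each $I_t$ is prime and that $I_1, \dots, I_k$ are exactly the minimal primes over $I$; in particular they are pairwise incomparable and $I = \bigcap_t I_t$. It then remains to show that each $I_t$ is real.

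Fix $t$; we may assume $k \geq 2$, the case $k = 1$ being trivial. Since any prime ideal containing $\bigcap_{s \neq t} I_s$ must contain some $I_s$ with $s \neq t$, and since the $I_s$ are pairwise incomparable, we have $\bigcap_{s \neq t} I_s \not\subseteq I_t$; pick $g \in \bigcap_{s \neq t} I_s$ with $g \notin I_t$. Now suppose $\sum_{i=1}^{m} f_i^2 \in I_t$. The key step is to multiply by $g^2$: the polynomial $\sum_{i=1}^{m} (g f_i)^2 = g^2 \sum_{i=1}^{m} f_i^2$ lies in $I_t$, and it lies in each $I_s$ with $s \neq t$ because $g \in I_s$; hence $\sum_{i=1}^{m} (g f_i)^2 \in \bigcap_s I_s = I$. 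Reality of $I$ then forces $g f_i \in I \subseteq I_t$ for every $i$, and since $I_t$ is prime with $g \notin I_t$ we conclude $f_i \in I_t$. Thus $I_t$ is real.

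The argument is short, and the one genuinely useful idea is the multiplication by $g^2$, which converts a sum of squares lying only in the single component $I_t$ into a sum of squares lying in all of $I$, where reality is available by hypothesis. The point that requires care is the reduction at the start of the ``only if'' direction: without irredundancy of the decomposition the statement is false (for instance $(x) = (x) \cap (x, y^2)$ in $\R[x, y]$ exhibits the non-real, non-prime primary component $(x, y^2)$), so I would either invoke the standard structure theory of primary decompositions of radical ideals or state explicitly that the decomposition is taken to be irredundant.
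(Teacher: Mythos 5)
The paper does not prove this proposition itself; it only cites Lemma~2.5 of Efroymson~\cite{Efroymson74}, so there is no in-text argument to compare yours against. On its own terms your proof is correct, and the multiplication by $g^2$ --- which pushes a sum of squares lying only in the single component $I_t$ into all of $I$, where reality applies --- is exactly the right device and is the one used in the standard references. Your observation that irredundancy must be assumed, together with the counterexample $(x)=(x)\cap(x,y^2)$, is also well taken.

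The one step that is not quite right as written is the claim that your two recorded facts (``a real ideal is radical'' and ``a radical primary ideal is prime'') already show that each $I_t$ is prime. The second fact does not apply to $I_t$ directly: radicality of $I$ does not by itself make $I_t$ radical, so you are implicitly appealing to the structure theory of radical ideals (no embedded primes; in an irredundant decomposition of a radical ideal the minimal primary components coincide with the minimal primes), which you acknowledge only as an afterthought. In fact you can bypass that theory by reorganizing. Pick $g\in\bigcap_{s\neq t}I_s$ with $g\notin I_t$ (irredundancy gives this directly, without needing the $I_s$ to be pairwise incomparable). First note $g\notin\sqrt{I_t}$: otherwise $g^{2m}\in\bigcap_s I_s=I$ for suitable $m$, and reality of $I$ (hence radicality) would force $g\in I\subseteq I_t$. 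Now run your $g^2$ argument using only the \emph{primary} property of $I_t$: if $\sum f_i^2\in I_t$ then $\sum(gf_i)^2\in I$, reality gives $gf_i\in I\subseteq I_t$, and since $I_t$ is primary with $g\notin\sqrt{I_t}$ we get $f_i\in I_t$. This shows $I_t$ is real, hence radical, and a radical primary ideal is prime; so the reorganized argument is self-contained and avoids any appeal to the uniqueness theorems for primary decomposition.
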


For a while, we assume that ideals are prime, and investigate 
the reality.

\begin{lemma}
[Theorem~1.11 of \cite{DF70}]\label{Iprime}
If a prime ideal $I$ in $\R[x]$ is real,
then $I'=\C[x] I:=\{ph\mid p\in
 \C[x],h\in I\}$ is prime in $\C[x]$.
\end{lemma}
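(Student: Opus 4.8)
The plan is to reduce the statement to a rationality/base-change fact about real prime ideals, exploiting that a prime ideal $I \subset \R[x]$ is real if and only if $\cV(I)$ contains a nonsingular real point whose local dimension equals $\dim I$ (this is the simple point criterion, available to us via Theorem~\ref{realcond}, and in any case reality gives $\cI(\cV(I)) = I$, so $\cV(I)$ is Zariski-dense enough to contain smooth points with $\dim^{\mathrm{top}} \cV(I) = \dim I$). The real field extension $\R \subset \C$ is of degree $2$, and $I' = \C[x]\, I$ is the extension of $I$ to $\C[x]$; the question is whether the extension of a prime ideal under the flat ring map $\R[x] \to \C[x]$ stays prime. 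In general this can fail (e.g. $\langle x^2+1\rangle$ in $\R[x_1]$ extends to $\langle x-i\rangle \cap \langle x+i\rangle$), so reality of $I$ must be what prevents the splitting.

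First I would recall the algebraic reformulation: $I'$ prime is equivalent to $\C[x]/I' = \C \otimes_\R (\R[x]/I)$ being an integral domain. Writing $A = \R[x]/I$, which is a domain with fraction field $F = \mathrm{Frac}(A)$, flatness of $\C$ over $\R$ gives $\C \otimes_\R A \hookrightarrow \C \otimes_\R F$, so it suffices to show $\C \otimes_\R F$ is a domain, i.e. that $\C$ and $F$ are linearly disjoint over $\R$, equivalently that $\sqrt{-1} \notin F$ — that the real prime $I$ does not ``secretly contain'' an element behaving like $x_j^2 + 1$. Concretely, $\C \otimes_\R F$ fails to be a domain precisely when $F$ contains a square root of $-1$, which would mean there are polynomials $p, q$ with $q \notin I$ and $p^2 + q^2 \in I$; but condition (2) in the definition of a real ideal forbids exactly this, forcing $q \in I$, a contradiction. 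So the core of the argument is: reality $\Rightarrow$ $-1$ is not a sum of two squares in $F$ $\Rightarrow$ $-1$ is not a square in $F$ $\Rightarrow$ $\C \otimes_\R F$ is a field (a degree-$2$ extension of $F$) $\Rightarrow$ $\C \otimes_\R A \subset \C \otimes_\R F$ is a domain $\Rightarrow$ $I'$ is prime.

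The steps, in order, would be: (i) translate $I'$ prime into $\C \otimes_\R A$ being a domain, with $A = \R[x]/I$; (ii) use flatness to embed $\C \otimes_\R A$ into $\C \otimes_\R F$; (iii) observe $\C \otimes_\R F \cong F[t]/(t^2+1)$, which is a domain iff $t^2+1$ is irreducible over $F$ iff $F$ has no element $z$ with $z^2 = -1$; (iv) suppose such $z = p/q$ exists with $p,q \in A$, $q \neq 0$; lifting to $\R[x]$ this says $p^2 + q^2 \in I$ with $q \notin I$, contradicting reality of $I$ via the sum-of-squares condition (2). Assembling (i)–(iv) yields the claim. One should also note the hypothesis ``$I$ prime'' is genuinely used in step (i): without it $A$ is not a domain and the fraction-field argument does not even start; reality of a general ideal does imply it is an intersection of real primes (Proposition~\ref{decomposition}), but here we are handed a prime already.

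The main obstacle, such as it is, is making step (iii)–(iv) airtight: one must be careful that ``$-1$ is a square in $F$'' is the exact obstruction, and that pulling an equation $p^2 + q^2 \equiv 0 \pmod I$ back from the fraction field to $\R[x]$ is legitimate — clearing denominators is fine since $A$ is a domain, and one only needs the resulting relation modulo $I$. A secondary subtlety is confirming $\C \otimes_\R F$ is genuinely the coordinate-ring base change (that the map $\R[x] \to \C[x] \to \C[x]/I'$ factors as claimed and that $I' \cap \R[x] = I$ since $I$ is prime and $\C/\R$ is separable, so no contraction pathology occurs); these are standard but worth a line. Beyond that the argument is short and formal, and I do not expect any computational grinding.
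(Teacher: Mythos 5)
Your proof is correct, but it takes a genuinely different route than the paper's. The paper argues directly: assume $I'$ is not prime, take $a+b\I,\ c+d\I \notin I'$ with $(a+b\I)(c+d\I)\in I'$, split into real and imaginary parts to get $ac-bd,\ ad+bc\in I$, combine these to deduce $c(a^2+b^2),\ d(a^2+b^2)\in I$, use primeness of $I$ (together with $c+d\I\notin I'$, so not both $c,d\in I$) to conclude $a^2+b^2\in I$, and finally invoke reality to force $a,b\in I$, i.e.\ $a+b\I\in I'$, a contradiction. Your argument instead passes to the fraction field: $I'$ is prime iff $\C\otimes_\R(\R[x]/I)$ is a domain, which by flatness embeds in $\C\otimes_\R F\cong F[t]/(t^2+1)$, and this is a domain iff $-1$ is not a square in $F$; if $-1=(p/q)^2$ with $q\neq 0$ in $\R[x]/I$, lifting and clearing denominators gives $p^2+q^2\in I$ with $q\notin I$, contradicting reality. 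Both proofs pivot on the same essential fact --- reality forbids $a^2+b^2\in I$ unless $a,b\in I$ --- but the paper's version is an elementary polynomial computation (consistent with the paper's stated goal of giving elementary proofs), while yours is shorter conceptually and situates the lemma inside the standard theory of base change of prime ideals under field extensions. Two small remarks: the opening digression about the simple point criterion and Zariski density does no work in your argument and could be dropped; and your worry about ``$I'\cap\R[x]=I$'' is unnecessary for what you prove --- the isomorphism $\C[x]/I'\cong\C\otimes_\R(\R[x]/I)$ follows from right-exactness of tensor alone and does not rely on separability or on that contraction identity.
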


\begin{proof}
By assumption, $I \subsetneq \R[x]$. 
Suppose $I=\<f_1,\dots,f_k\>\subset\R[x]$ to be prime in $\R[x]$ and $I'$ not to be prime in $\C[x]$. 
There exist $a,b,c,d \in \R[x]$ such that
$a+b\I, c+d\I \not\in I'$ and 
\begin{align*} 
(a+b \I)(c+d\I)&=\sum (u_i+v_i \I)f_i \quad (u_i,v_i \in \R[x]),
\end{align*}
and thus,
\begin{align*}
 ac-bd&=\sum u_i f_i \mbox{ and } ad+bc= \sum v_i f_i\in I.
\end{align*}
These equations yield that $c(a^2+b^2), d(a^2+b^2)$ are contained in
 $I$. Since
$c+d\I \not\in I'$, both of $c$ and $d$ are not in $I$. Hence $a^2+b^2$ belongs
to $I$. By reality of $I$, both $a$ and $b$ belong to $I$. This implies
that $a+b\I$ belongs to $I'$, which is a contradiction.   
\end{proof}

\begin{lemma}[Lemma~3.4 of \cite{DF70}]\label{DF3.4}
If a prime ideal $I$ in $\R[x]$ is real, 
then $\rank I +\dim I=n$.
\end{lemma}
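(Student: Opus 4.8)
The statement to prove is Lemma~3.5 (the one labelled \ref{DF3.4}): if a prime ideal $I\subset\R[x]$ is real, then $\rank I + \dim I = n$. The natural strategy is to pass to the complex picture, where dimension theory is cleaner, and then transfer back. By Lemma~\ref{Iprime}, reality and primeness of $I$ give that $I'=\C[x]I$ is prime in $\C[x]$, and $\dim I' = \dim I$ since dimension is insensitive to field extension (as recalled in the paragraph defining $\dim$). So the problem reduces to showing $\rank I' + \dim I' = n$ over $\C$ — provided one first checks that the rank is unchanged, i.e.\ that the maximal rank of the Jacobian $\tfrac{\partial(f_1,\dots,f_m)}{\partial(x_1,\dots,x_n)}$ is the same whether we let the evaluation point range over $\cV(I)\subset\R^n$ or over $\cV_\C(I')\subset\C^n$. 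This last point is where reality is essential: over $\C$ the Jacobian attains its generic (maximal) rank on a Zariski-open dense subset of the irreducible variety $\cV_\C(I')$, and the generic complex rank equals $n-\dim I'$ by the Jacobian criterion for the dimension of an irreducible affine variety at a smooth point. The claim that this generic rank is already attained at some \emph{real} point is exactly the content of the simple point criterion (Theorem~\ref{realcond}) combined with the fact that a real prime ideal has $\cV(I)$ Zariski-dense in $\cV_\C(I')$.

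The key steps, in order, are as follows. First I would invoke Lemma~\ref{Iprime} to get $I'$ prime in $\C[x]$, so $\cV_\C(I')$ is an irreducible affine variety of dimension $d:=\dim I' = \dim I$. Second, I would record the classical Jacobian criterion: for generators $f_1,\dots,f_m$ of $I'$, the set of points of $\cV_\C(I')$ at which the Jacobian has rank exactly $n-d$ is a nonempty Zariski-open subset $U$ of $\cV_\C(I')$ (these are precisely the smooth points), and nowhere on $\cV_\C(I')$ does the rank exceed $n-d$; hence the rank of $I'$ (max over $\cV_\C(I')$) is $n-d$, i.e.\ $\rank I' + \dim I' = n$. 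Third, I would argue that the real points $\cV(I)$ are Zariski-dense in $\cV_\C(I')$: this follows from reality of $I$, since otherwise $\cV(I)$ would lie in a proper subvariety $\cV_\C(I')\cap\{g=0\}$ with $g\notin I'$, forcing $g\cdot\ol g$ (or an appropriate real multiple) to vanish on all of $\R^n$-points of $I$ hence lie in $\cI(\cV(I))=I$, contradicting primeness and $g\notin I$. Fourth, density implies $\cV(I)\cap U\neq\emptyset$, so the Jacobian attains rank $n-d$ at a real point; combined with the upper bound from step two (valid in particular at real points), $\rank I = n-d$.

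The main obstacle is the transfer of the generic-rank statement from $\C^n$ to $\R^n$ — i.e.\ making precise that a real prime ideal has enough real points to hit the smooth locus. One clean way, which also ties in with the announced Theorem~\ref{condtopdim}, is to route through the simple point criterion (Theorem~\ref{realcond}): reality of $I$ is equivalent to $\cV(I)$ containing a point that is simple (the Jacobian has rank $n-\dim I$ there). Granting Theorem~\ref{realcond}, step three and four collapse into a single line: pick a simple real point $x_0\in\cV(I)$; then $\rank I \geq n - \dim I$ by definition of a simple point, while $\rank I \leq n-\dim I$ because the Jacobian of $m$ generators can have rank at most $n - \dim\cV_\C(I')$ anywhere on the irreducible variety (a point where it were larger would be a smooth point of a variety of dimension $> d$, impossible). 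Hence equality. A subtlety to handle carefully: one must check that $\R[x]$ can be replaced by generators $f_1,\dots,f_m$ of $I$ without loss — the rank as defined uses \emph{some} generating set, and one should note it is independent of the chosen finite generating set, or simply fix the generators given in the hypothesis of the ambient lemma. I would also remark that if Theorem~\ref{realcond} is itself being proved later using this lemma, then the density argument of step three (which uses only the definition of real ideal directly) should be used instead, to avoid circularity.
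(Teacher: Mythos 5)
Your proposal is correct and matches the paper's strategy: pass to $I'=\C[x]I$ via Lemma~\ref{Iprime}, use the Jacobian criterion over the algebraically closed field $\C$ to get $\rank I'+\dim I'=n$, and transfer the rank back to a real point using reality $\cI(\cV(I))=I$. The paper phrases the transfer directly via minors of the Jacobian: assuming $\rank I+\dim I<n$, every $(n-\dim I)$-size minor $h_i$ vanishes on $\cV(I)$, so reality forces $h_i\in I$, yet some such $h_i$ is nonzero on $\cV(I')$ and hence not even in $I'$ — a contradiction; your Zariski-density formulation is the same argument repackaged, and you rightly flagged and avoided the circular route through Theorem~\ref{realcond}. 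One small fix in your density step: after deducing $g\ol{g}\in\cI(\cV(I))=I$, the clean way to the contradiction is to write $g=a+b\I$ with $a,b\in\R[x]$, note $a^2+b^2\in I$, and apply the sum-of-squares characterization of reality to get $a,b\in I$, hence $g\in I'$ — the phrase ``contradicting primeness and $g\notin I$'' is not quite right since $g\in\C[x]$ and a priori primeness of $I'$ only yields $g\in I'$ or $\ol{g}\in I'$.
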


\begin{proof}
Fix the generators $f_1,\dots,f_k$ of $I$. 
From Lemma~\ref{Iprime}, $I'$ is prime in $\C[x]$, and thus
$\rank I' +\dim I'= n$ holds, because $\C$ is algebraically closed.
Suppose $\rank I +\dim I<n$ on $\cV(I)$ 
and let $h_i$'s be the sub-determinants
of Jacobian of size $r=n-\dim I$. From the assumption,
there exists $i$ such that $h_i$ is identically zero on $\cV(I)$ but
is not identically zero on $\cV(I')$.
Such $h_i$ does not belong to $I'$, thus $h_i\not\in I$,
which contradicts reality of $I$.
\end{proof}

The following lemma can be shown similarly to the proof  
of Lemma~\ref{Iprime},

\begin{lemma}
\label{gg*}
If $I$ is a prime ideal in $\R[x]$ 
and $I'=\C[x] I$ is not prime in $\C[x]$.
Then, there exists an irreducible polynomial 
$g(x)$ in $\C[x]\setminus I' $ such that $g(x)\ol{g}(x)\in I$.
\end{lemma}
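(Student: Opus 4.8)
The plan is to imitate the proof of Lemma~\ref{Iprime}. First I would record the elementary fact, already used implicitly there, that $I'=\C[x]I=I+\I\,I$; consequently a polynomial $p+\I q$ with $p,q\in\R[x]$ lies in $I'$ if and only if both $p\in I$ and $q\in I$, and in particular a polynomial with real coefficients lies in $I'$ exactly when it lies in $I$. This is what will let conclusions about $\C[x]$ be transferred back to the prime ideal $I\subset\R[x]$.

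Next, since $I'$ is not prime, choose $F,G\in\C[x]\setminus I'$ with $FG\in I'$ and write $F=a+b\I$, $G=c+d\I$ with $a,b,c,d\in\R[x]$. Expanding $FG$ and using the remark above gives $ac-bd\in I$ and $ad+bc\in I$, whence the two combinations $(a^2+b^2)c=a(ac-bd)+b(ad+bc)$ and $(a^2+b^2)d=-\bigl(b(ac-bd)-a(ad+bc)\bigr)$ both lie in $I$. Since $G\notin I'$, the polynomials $c$ and $d$ are not both in $I$, so primeness of $I$ forces $a^2+b^2\in I$; and $a^2+b^2=F\ol{F}$ because $a,b$ have real coefficients. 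Thus we have produced $F\in\C[x]\setminus I'$ with $F\ol{F}\in I$, which is exactly the analogue of the step in Lemma~\ref{Iprime} where one extracts $a^2+b^2\in I$; note also that $F$ is not a unit, since otherwise $F\ol{F}$ would be a nonzero constant lying in the proper ideal $I$.

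It then remains to descend from $F$ to an irreducible factor. Factor $F=g_1\cdots g_N$ into irreducibles in the UFD $\C[x]$ (absorbing the unit), with $N\ge 1$. Since coefficientwise conjugation is a ring automorphism of $\C[x]$ fixing $\R[x]$, we have $\ol{F}=\ol{g_1}\cdots\ol{g_N}$ with each $\ol{g_i}$ again irreducible, so $F\ol{F}=\prod_{i=1}^N g_i\ol{g_i}\in I$. Each factor $g_i\ol{g_i}$ is fixed by conjugation and hence lies in $\R[x]$, so $\prod_i(g_i\ol{g_i})$ is a product of elements of $\R[x]$ lying in the prime ideal $I$; therefore $g:=g_i$ satisfies $g\ol{g}\in I$ for some $i$. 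Finally $g\mid F$ in $\C[x]$, so $g\in I'$ would force $F\in I'$, a contradiction; hence $g\notin I'$, and this $g$ is the required irreducible polynomial.

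I do not expect a genuine obstacle. The only mildly delicate points are the bookkeeping with $I'=I+\I\,I$ (needed so that the "real" conclusions $a^2+b^2\in I$ and $g\ol{g}\in I$ can be read off directly) and the observation that each $g_i\ol{g_i}$ has real coefficients — this is precisely what allows primeness of $I$ in $\R[x]$, rather than any factorization property of $I'$ in $\C[x]$, to isolate a single irreducible factor.
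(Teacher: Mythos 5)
Your proof is correct and follows essentially the same route as the paper's: expand the product $FG\in I'$ to extract $a^2+b^2\in I$ with $a+b\I\notin I'$ (the paper does this by citing the analogous computation in Lemma~\ref{Iprime}), then descend to an irreducible factor $g_i$ using the fact that each $g_i\ol{g_i}\in\R[x]$ and $I$ is prime in $\R[x]$. The only cosmetic differences are that you make the identification $I'=I+\I\,I$ explicit, note that $F$ is not a unit, and perform the full factorization in one pass rather than the paper's binary reduction; these do not change the substance.
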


\begin{proof}
By assumption, $I \subsetneq \R[x]$. 
Suppose $I=\<f_1,\dots,f_k\>\subset\R[x]$ to be prime and $I'$ not to be prime. 
Similar to the proof of Lemma~\ref{Iprime}, there exist $a,b\in \R[x]$
 such that
$a+b\I \not\in I'$ and  $a^2+b^2 \in I$.
Set $g=a+b\I$. Then $g \in \C[x]\setminus I'$ 
and $g\ol{g}=a^2+b^2 \in I$.

If $g$ is factorized as $g=g_1g_2$, then $g_1,g_2 \not\in I'$ and
$g\ol{g}=(g_1\ol{g_1})(g_2 \ol{g_2})\in I$. Since $I$ is prime,
$g_1\ol{g_1}$ or $g_2 \ol{g_2}$ belongs to $I$. 
Reset $g$ as suitable one.
\end{proof}

\begin{example}
(i) The ideal $J=\<(x^2+y^2)z\>$ is decomposed as $J=\<x^2+y^2\>\cap \<z\>$.
Here, $I=\<x^2+y^2\>$ is prime, while $I'$ is decomposed as 
$I'=\<x+y\I\>\cap \<x-y\I\>$, thus $I$ is not real, neither is $J$.\\ 
(ii) The ideal $I=\<x^2+y^2+z^2\>$ is prime and $I'$ is also prime.
However, since the rank of $I$ is zero, $I$ is not real.\\ 
(iii) 
The ideal $I=\<x^2+y^2,z^2+w^2, xz+yw, xw-yz\>$ is prime in $\R[x]$, while
$I'$ is decomposed as $I'=\<x+y\I, z+w\I\>\cap \<x-y\I, z-w\I\>$, thus $I$
is not real.
This example is essential for the proof of the following proposition. 
\end{example}

The following proposition is essentially a special case of Theorem~9.3 
of \cite{Matsumura89}, which is related to going-up theorem.
We give an elementary and constitutive proof. 

\begin{proposition}\label{Ig}
If $I$ is a prime ideal in $\R[x]$ and $I'=\C[x]$ is not prime in $\C[x]$.
Then, there exist irreducible polynomials
$g_1(x),\dots,g_k(x) \in \C[x]\setminus I' $ such that 
$g_1(x)\ol{g_1}(x)$, $\dots$, $g_k(x) \ol{g_k}(x)$ belong to $I$ 
and
$I'=\<I',g_1,\dots,g_k\>\cap \<I',\ol{g_1},\dots,\ol{g_k}\>$ (denoted as
$I_g' \cap I_{\ol{g}}'$)
is the prime decomposition of $I'$.
\end{proposition}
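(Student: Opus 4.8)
The plan is to build the factors $g_1,\dots,g_k$ one at a time by repeatedly applying Lemma~\ref{gg*} and controlling the "remaining" part of the ideal. First I would invoke Lemma~\ref{gg*} to obtain an irreducible $g_1\in\C[x]\setminus I'$ with $g_1\ol{g_1}\in I$, and hence $g_1\ol{g_1}\in I'$. Since $g_1$ and $\ol{g_1}$ are each irreducible in $\C[x]$, I want to show that the ideals $I_{g_1}':=\<I',g_1\>$ and $I_{\ol{g_1}}':=\<I',\ol{g_1}\>$ are prime (or at least that their intersection has the right structure) and that $I'\subseteq I_{g_1}'\cap I_{\ol{g_1}}'$, with the reverse inclusion following from $g_1\ol{g_1}\in I'$ together with primality of the factors: if $p\in I_{g_1}'\cap I_{\ol{g_1}}'$, then modulo $I'$ one argues in the domain situation to force $p\in I'$. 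The key algebraic fact I would use is that $\C[x]/I'$, though not a domain, is a reduced ring whose minimal primes correspond exactly to the minimal primes over $I'$; the elements $g_1,\ol{g_1}$ (and their conjugates) are zero-divisors whose product lies in every minimal prime, so each minimal prime contains $g_1$ or $\ol{g_1}$.

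Second, I would set up the induction. Having peeled off $g_1$, consider the ideal $\<I',\ol{g_1}\>$ as a (possibly still reducible) ideal and ask whether $\C[x]\,\<I',\ol{g_1}\>$-analogue is prime; if not, Lemma~\ref{gg*} applied to an appropriate real ideal underneath produces $g_2$ with $g_2\ol{g_2}$ in it, and I iterate. The crucial bookkeeping is that each step strictly decreases the number of associated primes of $I'$ (a finite invariant since $\C[x]$ is Noetherian), so the process terminates after $k$ steps, where $k$ is essentially the number of minimal primes of $I'$, which is $2$ in the $\C$-irreducible case but can be larger in general — actually, since $I$ is real... wait, $I$ is \emph{not} assumed real here, only prime, so $I'$ may have many components. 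The statement claims exactly two "halves", $I_g'=\<I',g_1,\dots,g_k\>$ and its conjugate, so the real structure forces the associated primes to come in conjugate pairs, and the $g_i$ are chosen so that one member of each pair contains all the $g_i$ and the other contains all the $\ol{g_i}$. I would make this precise by noting complex conjugation acts as an involution on the set of minimal primes of $I'$ with no fixed points (a fixed prime would be generated by real polynomials, giving a prime strictly between... contradiction with primality/minimality of $I$), hence they split into $\{P_1,\ol{P_1}\},\dots,\{P_k,\ol{P_k}\}$, and $g_i$ is taken as a generator of $P_i$ not in $I'$ with $g_i\ol{g_i}\in P_i\cap\ol{P_i}\subseteq\dots\subseteq I$ after clearing denominators as in Lemma~\ref{gg*}.

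Third, I would verify that $I'=I_g'\cap I_{\ol{g}}'$ is genuinely the prime decomposition: each of $I_g'=\<I',g_1,\dots,g_k\>$ and $I_{\ol{g}}'$ is prime because, after quotienting by $I'$ and localizing, adjoining the $g_i$ picks out exactly one prime from each conjugate pair (this needs that $\<I',g_1,\dots,g_k\>$ is contained in a \emph{unique} minimal prime, which follows from the pairwise-coprimality of the components and $g_i\ol{g_i}$ lying in $I$); and the intersection recovers $I'$ because $I'=\bigcap_j(P_j\cap\ol{P_j})$ and grouping the conjugate pairs and using the Chinese-remainder-type splitting $\bigcap P_j = \<I',g_1,\dots,g_k\>$ gives the two-term intersection. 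The main obstacle I anticipate is precisely this last bundling step — showing that $\bigcap_{j=1}^k P_j$ equals the \emph{finitely generated} ideal $\<I',g_1,\dots,g_k\>$ rather than something larger; this requires showing $g_j\in P_i$ for $i\ne j$ as well, i.e. that each $g_j$ vanishes on all the "$P$-side" components, which one extracts from the Lemma~\ref{gg*} construction by choosing the $g_j$ compatibly (replacing $g_j$ by a suitable multiple/factor so that $g_j\ol{g_j}\in I$ forces $g_j\in P_i$ whenever $\ol{g_j}\notin P_i$, and $\ol{g_j}\notin P_i$ for $i\ne j$ by the coprimality of distinct conjugate pairs). Handling this coherently — rather than getting a different $g_j$ for each component — is the real content, and I would phrase the induction so that at stage $j$ one works modulo $\<I',g_1,\dots,g_{j-1}\>$, guaranteeing the new $g_j$ automatically lies in the earlier components.
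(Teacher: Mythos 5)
There is a structural misconception that undermines your argument. You organize your proof around the claim that the minimal primes of $I'$ split into $k$ conjugate pairs $\{P_1,\ol{P_1}\},\dots,\{P_k,\ol{P_k}\}$, and that $I_g'=\<I',g_1,\dots,g_k\>$ should equal $\bigcap_{j=1}^k P_j$, "one from each pair." But the proposition asserts that $I_g'$ itself is \emph{prime}, and an intersection of $k$ distinct primes is prime only when $k=1$. In fact, since $\R[x]\subset\C[x]$ is an integral extension of degree two, at most two primes of $\C[x]$ lie over the prime $I$, so $I'$ has exactly two minimal primes $P$ and $\ol{P}$ when it fails to be prime; the index $k$ in the statement is the number of \emph{additional generators} needed to present $P$ over $I'$, not the number of conjugate pairs. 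Your bookkeeping ("each step strictly decreases the number of associated primes") and the whole "bundling one prime from each pair" picture are therefore aimed at the wrong quantity, and the final verification that $I_g'$ is prime cannot go through as sketched.

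Two further points are asserted without justification. First, you invoke that $\C[x]/I'$ is reduced (equivalently, that $I'$ is radical) to identify the decomposition with a prime one. This is true, but it is precisely the content that the paper establishes at the very end of its proof (via the argument that $g^m\in I_g'$ forces $g\ol g\in I$ and hence $g\in I_g'$); you cannot take it as a given input. Second, the inductive step "apply Lemma~\ref{gg*} to an appropriate real ideal underneath" is not available: $\<I',\ol{g_1}\>$ is a complex ideal, and its real part $\<I',\ol{g_1}\>\cap\R[x]$ need not be prime, so the hypotheses of Lemma~\ref{gg*} are not met. The paper instead produces the next generator directly from a witness $g_{k+1}h_{k+1}\in I_g'$ of non-primality, pushes down to get $g_{k+1}\ol{g_{k+1}}\in I$ using primality of $I$, and then carries out an explicit sign/case analysis on the membership of the cross-terms $a_ia_{k+1}\pm b_ib_{k+1}$, $a_ib_{k+1}\pm a_{k+1}b_i$ in $I$ to determine whether $g_{k+1}$ or $\ol{g_{k+1}}$ should be adjoined to $I_g'$. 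That concrete case analysis, together with the Noetherian ascending-chain termination, is the real work of the proof, and your proposal does not supply a substitute for it. The conjugation-equivariance viewpoint you use is conceptually appealing and could be turned into a correct proof (by first establishing radicality and the two-minimal-primes fact via the degree-two integral extension), but as written the argument has genuine gaps.
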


\begin{proof}
From Lemma~\ref{gg*}, there exists an irreducible polynomial
$g_1\in \C[x]\setminus I' $ s.t. $g_1(x)\ol{g_1}(x)\in I$ 
and
$I'=\<I',g_1\>\cap \<I',\ol{g_1}\>$.
If we set $g_1=a_1+b_1\I$, since $I$ is prime, 
we have $a_1,b_1 \in \R[x]\setminus I$. 

We decompose $I'$ inductively with respect to $k$. 

Suppose $I'$ is decomposed as 
\begin{align}\label{Igk} 
I'&= I_g' \cap I_{\ol{g}}'
\end{align}
($I'\subsetneq I_g', I_{\ol{g}}'$) as the assertion.
If we set $g_i=a_i+b_i\I$, since $I$ is prime, $a_i,b_i \in \R[x]\setminus I$. 
If both $I_g'$ and $I_{\ol{g}}'$ are prime, the assertion follows.
Suppose $I_g'$ is not prime 
(which implies $I_{\ol{g}}'$ is not prime either). 
There exist $g_{k+1}, h_{k+1} \in \C[x]\setminus I_g'$
s.t. $g_{k+1}h_{k+1}\in I_g'$. We have
$$g_{k+1}h_{k+1} \ol{g_{k+1}h_{k+1}} \in I_g' \cap I_{\ol{g}}' \cap \R[x] =I,$$
$$g_{k+1}\ol{g_{k+1}} \in \R[x] \mbox{ and }  h_{k+1}\ol{h_{k+1}} \in \R[x].$$
Since $I$ is prime, $g_{k+1}\ol{g_{k+1}}$ or $h_{k+1}\ol{h_{k+1}}$ belongs to 
$I$.
We can assume $g_{k+1}\ol{g_{k+1}}\in I$. 
Similar to the proof of Lemma~\ref{gg*}, 
we can also assume $g_{k+1}$ is irreducible.
Set $g_{k+1}=a_{k+1}+b_{k+1}\I$ 
($a_{k+1},b_{k+1} \in \R[x]\setminus I$). 
Then we have, for $1\leq i\leq k$,
\begin{align*}
a_i^2+b_i^2, a_{k+1}^2+b_{k+1}^2 &\in I,\\
a_i^2a_{k+1}^2-b_i^2b_{k+1}^2, a_i^2b_{k+1}^2-a_{k+1}^2b_i^2 &\in I, \\
a_ia_{k+1}+b_ib_{k+1}\mbox{ or } a_ia_{k+1}-b_ib_{k+1} &\in I,\\
a_ib_{k+1}+a_{k+1}b_i\mbox{ or } a_ib_{k+1}-a_{k+1}b_i & \in I. 
\end{align*}
When $a_ia_{k+1}+b_ib_{k+1}, a_ib_{k+1}+a_{k+1}b_i \in I$ for some $i$, 
we have 
$(a_ia_{k+1}+b_ib_{k+1})^2-(a_ib_{k+1}+a_{k+1}b_i)^2
=(a_i^2-a_{k+1}^2)(b_i^2-b_{k+1}^2)\in I$.
Thus, $(a_i^2-b_i^2)$ or $(a_{k+1}^2-b_{k+1}^2)$ is in $I$.
Since $I$ is prime,
the former implies $a_i, b_i \in I$ and
the latter implies $a_{k+1}, b_{k+1} \in I$,
each of which is a contradiction.   
Similarly, $a_ia_{k+1}-b_ib_{k+1}, a_ib_{k+1}-a_{k+1}b_i \in I$ 
yields a contradiction.

Suppose $a_ia_{k+1}+b_ib_{k+1}, a_ib_{k+1}-a_{k+1}b_i \in I$
for all $1\leq i \leq k$.
$I'$ is decomposed as $I'= J \cap J^* $ where
\begin{align*}
J=&\<I', a_1+b_1\I, \dots, a_k+b_k\I, a_{k+1}+b_{k+1}\I \>, \\
&\cap \<I',  a_1-b_1\I, \dots, a_k-b_k\I, a_{k+1}-b_{k+1}\I\>,\\
J^*=&\<I',  a_1+b_1\I, \dots, a_k+b_k\I, a_{k+1}-b_{k+1}\I \>,\\
 &\cap \<I',  a_1-b_1\I, \dots, a_k-b_k\I, a_{k+1}+b_{k+1}\I\>.
\end{align*}
We show $I'=J$, i.e.
\begin{align*}
(a_i+b_i\I)(a_i-b_i\I),\ (a_i+b_i\I)(a_{k+1}-b_{k+1}\I),\\
(a_{k+1}+b_{k+1}\I)(a_i-b_i\I),\   
(a_{k+1}+b_{k+1}\I)(a_{k+1}-b_{k+1}\I)
\end{align*} belong to $I'$.
This means
\begin{align*}
a_i^2+b_i^2,\ (a_ia_{k+1} +b_ib_{k+1})- (a_i b_{k+1} -a_{k+1}b_i)\I,\\
a_ia_{k+1}+b_ib_{k+1} +(a_ib_{k+1}-a_{k+1}b_i)\I,\ 
a_{k+1}^2+b_{k+1}^2
\end{align*} 
belong to $I'$, which is obvious by the assumption.
Similarly, the assumption that 
$a_ia_{k+1}-b_ib_{k+1}, a_ib_{k+1}+a_{k+1}b_i \in I$ for all 
$1\leq i\leq k$ yields $I'=J^*$.
Exchanging $g_{k+1}$ and $\ol{g_{k+1}}$, we have
$$I'= \<I_g', g_{k+1} \> \cap 
\<I_{\ol{g}}', \ol{g_{k+1}}\>.$$

Since $I' \subsetneq \<I',g_1\>\subsetneq \<I',g_1,g_2\> \subsetneq \dots$ is
an ascending chain, 
this procedure terminates in finite steps.
Finally, we show that if $I'$ is primarily decomposed as \eqref{Igk},
this is the prime decomposition.
Suppose $g^m \in I_g'$ for some $m>1$ and
$g \not\in I_g'$. Then, 
$g^m\ol{g}^m \in I_g' \cap I_{\ol{g}}' \cap \R[x]=I.$
Since $I$ is prime, $g\ol{g}\in I$. By construction, 
we have $g \in I_g'$. 
\end{proof}

\begin{corollary}
Let $I$ be a prime ideal in $\R[x]$, then 
$\dim I'+ \rank I'=n$ holds for $I'=\C[x] I$. Moreover if 
$I'$ is not prime in $\C[x]$, then
$\dim I'=\dim I_g'=\dim I_{\ol{g}}'$ and
$\rank I'=\rank I_g'=\rank I_{\ol{g}}'$ hold.
\end{corollary}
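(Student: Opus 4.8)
The plan is to reduce the corollary to two ingredients: the equality $\dim P+\rank P=n$ for a prime ideal $P$ of $\C[x]$, which holds because $\C$ is algebraically closed (this is precisely the fact already invoked in the proof of Lemma~\ref{DF3.4}), and the description of the prime decomposition of $I'=\C[x]I$ furnished by Proposition~\ref{Ig}. Fix generators $f_1,\dots,f_m$ of $I$, so that $\frac{\partial(f_1,\dots,f_m)}{\partial(x_1,\dots,x_n)}$ computes both $\rank I$ and $\rank I'$. If $I'$ is prime, the first ingredient applied to $I'$ already gives $\dim I'+\rank I'=n$ and the ``moreover'' clause is vacuous, so from now on assume $I'$ is not prime; by Proposition~\ref{Ig}, $I'=I_g'\cap I_{\ol g}'$ is then its prime decomposition, with $I_g'=\<f_1,\dots,f_m,g_1,\dots,g_k\>$ and $I_g'\ne I_{\ol g}'$ (otherwise $I'=I_g'$ would be prime).

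The first step is to show $\dim I_g'=\dim I_{\ol g}'$. Coefficientwise conjugation $\sigma\colon\sum_\alpha c_\alpha x^\alpha\mapsto\sum_\alpha\ol{c_\alpha}x^\alpha$ is a ring automorphism of $\C[x]$ that fixes $\R[x]$ pointwise, hence fixes $I$ and therefore $I'=\C[x]I$; since it sends $\{f_1,\dots,f_m,g_1,\dots,g_k\}$ to $\{f_1,\dots,f_m,\ol{g_1},\dots,\ol{g_k}\}$, it carries $I_g'$ onto $I_{\ol g}'$. Being a ring isomorphism, $\sigma$ preserves Krull dimension, so $\dim I_g'=\dim I_{\ol g}'=:d$, and hence $\dim I'=\max(\dim I_g',\dim I_{\ol g}')=d$. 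Applying the first ingredient to the primes $I_g'$ and $I_{\ol g}'$ also gives $\rank I_g'=\rank I_{\ol g}'=n-d$.

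It remains to prove $\rank I'=n-d$. For the upper bound, any $b\in\cV(I')=\cV(I_g')\cup\cV(I_{\ol g}')$ lies on one component, say $\cV(I_g')$; since $f_1,\dots,f_m\in I'\subseteq I_g'$ vanish at $b$, the gradients $\nabla f_i(b)$ span a subspace of $\operatorname{span}\{\nabla p(b)\mid p\in I_g'\}$, and this last space has dimension at most $n-d$ because the Zariski tangent space of $\cV(I_g')$ at any point has dimension at least $\dim I_g'=d$; thus $\frac{\partial(f_1,\dots,f_m)}{\partial(x_1,\dots,x_n)}$ has rank at most $n-d$ at $b$. For the lower bound, pick $a\in\cV(I_g')$ that is a regular point of the $d$-dimensional irreducible variety $\cV(I_g')$ and does not lie on $\cV(I_{\ol g}')$; such $a$ exists because $\cV(I_{\ol g}')$ is a proper closed subset of $\cV(I_g')$ (the two primes being incomparable, since $I'$ is not prime) and the regular locus is dense. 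Choose $q\in I_{\ol g}'$ with $q(a)\ne 0$; then for every $p\in I_g'$ one has $pq\in I_g'\cap I_{\ol g}'=I'$ and $\nabla(pq)(a)=q(a)\,\nabla p(a)$ (as $p(a)=0$), so $\nabla p(a)\in\operatorname{span}\{\nabla f_1(a),\dots,\nabla f_m(a)\}$ (expanding $pq$ in the $f_i$ and using $f_i(a)=0$). Hence that span coincides with $\operatorname{span}\{\nabla p(a)\mid p\in I_g'\}$, which has dimension exactly $n-d$ at the regular point $a$; so the Jacobian of $f_1,\dots,f_m$ has rank $n-d$ there. With the upper bound this gives $\rank I'=n-d$, whence $\dim I'+\rank I'=n$, and $\dim I'=\dim I_g'=\dim I_{\ol g}'=d$, $\rank I'=\rank I_g'=\rank I_{\ol g}'=n-d$.

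The step I expect to be the main obstacle is the lower bound $\rank I'\ge n-d$: the inclusions $I'\subseteq I_g',I_{\ol g}'$ make the upper bound automatic, but showing that the Jacobian of the \emph{original} generators $f_1,\dots,f_m$ already attains the codimension of a single component forces one to use that the two conjugate components are genuinely distinct — producing a regular point of one that avoids the other, and clearing the contribution of that other component by a multiplier $q$ with $q(a)\ne 0$. The remaining steps are either the cited fact over $\C$ or the routine verification that coefficient conjugation is a dimension‑preserving automorphism interchanging $I_g'$ and $I_{\ol g}'$.
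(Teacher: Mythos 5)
Your proof is correct. It follows the same skeleton as the paper's: reduce to the prime components $I_g',I_{\ol g}'$ from Proposition~\ref{Ig}, get $\dim I_g'=\dim I_{\ol g}'$ via the coefficient-conjugation automorphism of $\C[x]$, bound $\rank I'\le\rank I_g'$ by inclusion of gradient spans, and then prove the harder inequality by exhibiting a point of $\cV(I_g')$ at which the rank of $I'$ already equals $n-d$. Where you differ is in that last step. The paper stays close to the explicit generators produced by Proposition~\ref{Ig}: it uses that $g_i\ol{g_i}=a_i^2+b_i^2\in I$, the identity $\partial_{x_j}(a_i^2+b_i^2)\big|_{g_i=0}=2a_i\,\partial_{x_j}g_i$, and the fact that $\{a_i=0\}\cap\cV(I_g')$ is a proper subvariety, to conclude $\rank_z I'=\rank_z I_g'$ at a generic $z$. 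You instead argue abstractly: choose any regular point $a$ of $\cV(I_g')$ off $\cV(I_{\ol g}')$ and a multiplier $q\in I_{\ol g}'$ with $q(a)\neq 0$, so that $pq\in I'$ and $\nabla(pq)(a)=q(a)\nabla p(a)$ absorbs all of $I_g'$'s gradients into those of $I'$. This version is cleaner and does not rely on the particular presentation of $I_g'$; it uses only that $I'=I_g'\cap I_{\ol g}'$ is a minimal prime decomposition (so the components are incomparable and their intersection with each other is a proper closed subset of each). Both versions need density of the regular locus of the irreducible complex variety $\cV(I_g')$, so there is no saving there. One small imprecision: $\cV(I_{\ol g}')$ is not a subset of $\cV(I_g')$; what you mean (and use) is that $\cV(I_g')\cap\cV(I_{\ol g}')$ is a proper closed subset of $\cV(I_g')$.
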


\begin{proof}
If $I'$ is prime, the assertion is obvious. We assume $I'$ not to be prime.
We show the latter assertion, which leads the former immediately.
By the general theory,
$\dim I' =\max \{\dim I_g', \dim I_{\ol{g}}' \}$ holds.
Now, by symmetry, it is clear that $\dim I_g'=\dim I_{\ol{g}}' $.
Next, we show the equation for the rank. 
We can assume $\cV(I') \neq \emptyset$.
By the symmetry of
$$\cV(I')=
\cV(I_g') \cup \cV(I_{\ol{g}}'),$$
it implies $\cV(I_g') \neq \emptyset$.
Set $g_i=a_i+b_i\I$. If $a_i \in I_g'$, then 
$a_i \in I_g'\cap I_{\ol{g}}'\cap \R[x]=I$, which is a contradiction.
Since $I_g' \subsetneq \<I_g',a_i\>$ and $I_g'$ is prime,
we have $\dim I_g' > \dim \<I_g', a_i\>$ for all $i$, and hence   
$$\dim (\cV(I_g') \cap 
(\cV \<a_1\> \cup \dots \cup \cV \<a_k\>))< \dim \cV(I_g').$$ 
Using 
$\partial_{x_j}(a_i^2+b_i^2)|_{g_i=0}=2a_i \partial_{x_j}(a_i+b_i\I)$,
we have
\begin{align*}
\rank I_g' 
&= \max_{\{z|\forall i, a_i \neq 0 \}} \rank I_g'\\
&= \max_{\{z|\forall i, a_i \neq 0\}}  \rank I'  \leq \rank I'.
\end{align*}  
The opposite inequality 
$\rank I' \leq \rank I_g'= \rank I_{\ol{g}}'$
is obvious.
\end{proof}

The following lemma is very elementary, but the authors could not find
it in
any literature.

\begin{lemma}
Let $k\geq 2$ and $I=I_1\cap \dots \cap I_k$ 
the prime decomposition of an ideal $I$ in $\C[x]$.
Then the rank of $I$ on $\cV(I_1) \cap \dots \cap \cV(I_k)$ is 
less than $n-\dim I$.
\end{lemma}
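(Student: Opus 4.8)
The plan is to establish the pointwise bound $\rank J(z)\le n-\dim I-1$ for every $z\in\cV(I_1)\cap\cdots\cap\cV(I_k)$, where $J(z)$ denotes the Jacobian of an arbitrary generating set of $I$ evaluated at $z$; if this intersection is empty there is nothing to prove. I would begin with two reductions. First, since $I=I_1\cap\cdots\cap I_k$ is an intersection of prime ideals it is radical, so by the Nullstellensatz $\cV(I)=\cV(I_1)\cup\cdots\cup\cV(I_k)$ is the decomposition of $\cV(I)$ into irreducible components and $I=\cI(\cV(I))$; moreover, for $z\in\cV(I)$ the product rule together with $f(z)=0$ shows that the rows of $J(z)$ span the space $\{df|_z\mid f\in I\}$ of differentials at $z$, so that $\rank J(z)=n-\dim_\C T_z\cV(I)$, where $T_z\cV(I)$ is the Zariski tangent space. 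In particular this number does not depend on the chosen generators. Second, after relabelling so that $\dim I_1=\dim I$, the inclusion $I\subseteq I_1$ gives $\{df|_z\mid f\in I\}\subseteq\{df|_z\mid f\in I_1\}$, and the dimension of the right-hand side is at most $\rank I_1=n-\dim I_1$ because $\C$ is algebraically closed (the identity $\rank P+\dim P=n$ for primes $P\subseteq\C[x]$, already used in the proof of Lemma~\ref{DF3.4}); thus $\rank J(z)\le n-\dim I$.

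The content of the lemma is that this last inequality is strict once $k\ge 2$, and I would obtain the strictness from the local ring $A:=(\C[x]/I)_{\mathfrak m_z}=\mathcal{O}_{\cV(I),z}$, where $\mathfrak m_z\subseteq\C[x]$ is the maximal ideal of the point $z$. Since $z$ lies on every component $\cV(I_t)$, each $I_t$ is contained in $\mathfrak m_z$, so the minimal primes of $A$ are exactly the localizations of $I_1,\dots,I_k$; as $I_1$ and $I_2$ are distinct primes below $\mathfrak m_z$, these localizations are distinct, so $A$ has at least two minimal primes and is in particular not an integral domain, hence not a regular local ring. Therefore $\dim_\C \mathfrak m_A/\mathfrak m_A^2>\dim A$. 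Finally $\dim_\C \mathfrak m_A/\mathfrak m_A^2=\dim_\C T_z\cV(I)$, while $\dim A=\max_t\dim(\C[x]/I_t)_{\mathfrak m_z}=\max_t\dim(\C[x]/I_t)=\dim I$, using that a finitely generated domain over a field has the same Krull dimension as its localization at any maximal ideal. Putting these together, $\rank J(z)=n-\dim_\C T_z\cV(I)=n-\dim_\C \mathfrak m_A/\mathfrak m_A^2\le n-\dim I-1<n-\dim I$, and since $z$ was arbitrary this is the assertion.

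The only step that is not pure bookkeeping is the implication ``$z$ lies on two distinct components of $\cV(I)$, hence $\mathcal{O}_{\cV(I),z}$ is not regular'', which I expect to be the part requiring the most care, although it is standard. For readers who prefer not to invoke local rings, the same point can be made geometrically: if $z$ were a nonsingular point of $\cV(I)$, the germ of $\cV(I)$ at $z$ would be irreducible and would coincide with the germ of its top-dimensional component $\cV(I_1)$; since $\cV(I_2)\subseteq\cV(I)$, the germ of $\cV(I_2)$ at $z$ would then be contained in that of $\cV(I_1)$, and Zariski density of a nonempty analytic neighbourhood in the irreducible variety $\cV(I_2)$ would force $\cV(I_2)\subseteq\cV(I_1)$, contradicting the irredundancy of the prime decomposition. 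The remaining ingredients are the two identities $\rank(\,\cdot\,)(z)=n-\dim_\C T_z(\,\cdot\,)$ and $\rank P+\dim P=n$ for primes $P$ over $\C$, together with elementary dimension theory.
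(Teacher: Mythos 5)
Your argument is correct, and it takes a genuinely different route from the paper's. The paper argues by contradiction and stays elementary: assuming $\rank_x I \geq n-\dim I$ at some $x$ in the common intersection, it applies the implicit function theorem to produce a $d$-dimensional complex submanifold $M$ of a neighbourhood $U_x$ that contains every $\cV(I_i)\cap U_x$, shows that every $g\in I_1$ vanishes identically on $M$ (else $\dim^{\mathrm{top}}\cV(I_1)<d$), then by irreducibility deduces $g=0$ on each $\cV(I_i)$, whence $I_1\subseteq\bigcap_i I_i=I$ and $I=I_1$, contradicting $k\geq 2$. You instead convert the Jacobian rank into the statement $\rank J(z)=n-\dim_\C \mathfrak{m}_A/\mathfrak{m}_A^2$ for the local ring $A=(\C[x]/I)_{\mathfrak{m}_z}$, note that $A$ has at least two minimal primes and therefore cannot be a domain, and invoke the theorem that a regular local ring is a domain to get the strict inequality $\dim_\C \mathfrak{m}_A/\mathfrak{m}_A^2>\dim A=\dim I$. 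This is cleaner and avoids the repeated-vanishing/irreducibility step, at the price of pulling in the commutative-algebra package (Zariski cotangent spaces, regularity $\Rightarrow$ integrality, and dimension-preservation under localization at a maximal ideal of an affine $\C$-domain); the paper's version is longer but uses only the implicit function theorem and basic irreducibility. Your geometric alternative at the end, showing that a nonsingular germ would force $\cV(I_2)\subseteq\cV(I_1)$ by Zariski density, is essentially the paper's argument reorganized. One small structural remark: your ``second reduction'' establishing the non-strict bound $\rank J(z)\leq n-\dim I$ is never actually used, since the local-ring argument yields the strict inequality on its own; that paragraph can be omitted.
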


\begin{proof}
Without loss of generality we can assume $\dim I_1=\dim I=:d$.
Suppose $\rank_x I \geq n-d$ for some $x \in \cap_{i=1}^k \cV(I_i)$.
Then, there exist $f_1,\dots,f_{n-d} \in I$ such that 
$\rank_x \<f_1,\dots,f_{n-d}\>=n-d$ and
from the implicit function theorem, the topological dimension 
of $\cV \<f_1,\dots,f_{n-d}\>$ in a neighborhood 
$U_x$ of $x$ is $d$. 
Since $I_1$ is prime in $\C[x]$, 
$\dim^{{\rm top}} \cV(I_1)$ is also $d$, 
which implies that a polynomial $g \in I_1$
is identically zero on
$\cV \<f_1,\dots,f_{n-d}\> \cap U_x$, unless otherwise 
$\dim^{{\rm top}} \cV(I_1)<d$.  
From the inclusion $\cV(I_i) \cap U_x \subset 
\cV \<f_1,\dots,f_{n-d}\> \cap U_x$,
$g$ is also identically zero on 
$\cV(I_i) \cap U_x$ for all $1\leq i\leq k$.
Since $I_i$ is prime, $g=0$ on $\cV (I_i)$,
which implies $g \in I_i$. Thus, $g \in \cap_{i=1}^k I_i=I$,
which implies $I=I_1$; contradicts $k\geq 2$.
\end{proof}

\begin{proposition}\label{notprime}
If $I$ is a prime ideal in $\R[x]$ and $I'=\C[x] I$ is not prime in $\C[x]$.
Then $\dim I +\rank I < n.$
\end{proposition}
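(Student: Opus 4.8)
The plan is to reduce the statement to the complex estimate proved in the preceding lemma (the one bounding, for a reducible ideal in $\C[x]$, the rank on the intersection of its prime components), exploiting that $I$ and $I'=\C[x]I$ are generated by the same polynomials and that the real points of $\cV(I')$ are forced into the intersection of its two prime components. First I would fix generators $f_1,\dots,f_m$ of $I$, so that the same $f_i$ generate $I'$. Since $I'$ is not prime, Proposition~\ref{Ig} supplies irreducible $g_1,\dots,g_k\in\C[x]\setminus I'$ with $g_j\ol{g_j}\in I$ and the prime decomposition $I'=I_g'\cap I_{\ol{g}}'$. By the corollary to Proposition~\ref{Ig}, $\dim I'=\dim I_g'=\dim I_{\ol{g}}'$ and $\dim I'+\rank I'=n$, while $\dim I=\dim I'$ because the dimension is unaffected by extension of the coefficient field.

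The crucial geometric step is the inclusion $\cV(I)\subseteq\cV(I_g')\cap\cV(I_{\ol{g}}')$, where $\cV(I)$ denotes the real zero set. Indeed $\cV(I)=\cV(I')\cap\R^n=\bigl(\cV(I_g')\cup\cV(I_{\ol{g}}')\bigr)\cap\R^n$. Complex conjugation fixes $\R^n$ pointwise, preserves $\cV(I')$ (the $f_i$ have real coefficients), and, since $\ol{g_j}(\ol{x})=\ol{g_j(x)}$, carries $\cV(I_g')$ into $\cV(I_{\ol{g}}')$ and vice versa; hence every real point of $\cV(I')$ lies simultaneously in $\cV(I_g')$ and in $\cV(I_{\ol{g}}')$, which yields the inclusion.

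It then suffices to combine these. Applying the preceding lemma to the prime decomposition $I'=I_g'\cap I_{\ol{g}}'$ (the case $k=2$), the rank of $I'$ on $\cV(I_g')\cap\cV(I_{\ol{g}}')$ is strictly smaller than $n-\dim I'$. Since $\rank I$ and $\rank I'$ are read off the same Jacobian $\frac{\partial(f_1,\dots,f_m)}{\partial(x_1,\dots,x_n)}$ and $\cV(I)\subseteq\cV(I_g')\cap\cV(I_{\ol{g}}')$, we obtain
\[
\rank I\;\le\;\bigl(\text{rank of }I'\text{ on }\cV(I_g')\cap\cV(I_{\ol{g}}')\bigr)\;<\;n-\dim I'\;=\;n-\dim I,
\]
that is, $\dim I+\rank I<n$. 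I expect the step requiring most care to be this inclusion, together with the bookkeeping that lets one transfer the rank bound for $I'$ (stated generator-free in the preceding lemma) to the chosen Jacobian of $I$ at real points — this causes no loss, since at any point of $\cV(I')$ the gradient of each element of $I'$ lies in the row span of the Jacobian of $f_1,\dots,f_m$.
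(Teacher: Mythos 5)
Your proposal is correct and follows the paper's proof essentially verbatim: reduce via Proposition~\ref{Ig} to the decomposition $I'=I_g'\cap I_{\ol{g}}'$, invoke the preceding lemma for the rank bound on the intersection locus, and establish $\cV(I)\subseteq\cV(I_g')\cap\cV(I_{\ol{g}}')$. The only (cosmetic) difference is in that last inclusion: the paper observes directly that $g_i\ol{g_i}\in I$ forces $|g_i(x)|^2=0$ at real $x\in\cV(I)$, whereas you argue via the conjugation automorphism swapping the two components and fixing $\R^n$ — both are fine and equally short.
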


\begin{proof}
By Proposition~\ref{Ig}, 
$I'$ is decomposed as \eqref{Igk}.
By the above lemma, it is enough to show 
$\cV(I) \subset \cV(I_g') \cap \cV(I_{\ol{g}}')$.
Since $g_i \ol{g_i} \in I$ for all $i$, $x \in \cV(I)\subset \R^n$ 
implies $g_i(x)=\ol{g_i}(x)=0$ and hence
$x \in \cV(I_g') \cap \cV(I_{\ol{g}}')$.
\end{proof}

\begin{lemma}[Lemma~3.9 of \cite{DF70}]\label{DF3.9}
For a prime ideal $I$ in $k[x]$, $I$ is real if and only if
$\rank \cI(\cV(I))=\rank I$.
\end{lemma}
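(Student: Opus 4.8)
The ``only if'' direction needs nothing: by definition $I$ is real precisely when $\cI(\cV(I))=I$, and then the two ranks agree. For the converse, assume $\rank\cI(\cV(I))=\rank I$ and (discarding the trivial case) that $\cV(I)\neq\emptyset$; put $J:=\cI(\cV(I))$. This $J$ is real — a sum of squares in $J$ vanishes on $\cV(I)$, hence so does each summand — it contains $I$, and $\cV(J)=\cV(I)$. By Proposition~\ref{decomposition} there is an irredundant decomposition $J=P_1\cap\dots\cap P_s$ with every $P_t$ prime and real; in particular the $P_t$ are the minimal primes over $J$, so they are pairwise incomparable. My goal is to prove $\dim P_t=\dim I$ for each $t$: then the inclusion $I\subseteq P_t$ of primes of $\R[x]$ cannot be proper, because a proper inclusion of primes of $\R[x]$ strictly lowers the Krull dimension, so $I=P_1=\dots=P_s=J$ and $I$ is real.

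First I would bound $\rank I$ from above. The corollary of Proposition~\ref{Ig}, applied to $I'=\C[x]I$, gives $\dim I'+\rank I'=n$. Since $\cV(I)\subseteq\cV(I')$ and the Jacobian of a fixed generating set of $I$ has the same rank over $\R$ as over $\C$ at a real point, $\rank I\le\rank I'=n-\dim I'=n-\dim I$. Together with the hypothesis, $\rank J=\rank I\le n-\dim I$.

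The heart of the matter is the opposite estimate: $\rank J\ge n-\dim P_t$ for every $t$. Fix $t$ and write $d_t=\dim P_t$. As $P_t$ is prime and real, Lemma~\ref{DF3.4} gives $\rank P_t=n-d_t$, so $\cV(P_t)$ carries a real point where the Jacobian of $P_t$ has rank $n-d_t$; by Lemma~\ref{Iprime} such points are exactly the real nonsingular points of the irreducible variety $\cV(\C[x]P_t)$, and they are dense in $\cV(P_t)$ (this is part of, or an immediate consequence of, the simple point criterion, Theorem~\ref{realcond}). For each $i\neq t$ choose $q_i\in P_i\setminus P_t$ (possible since the $P_i$ are incomparable) and set $Q=\prod_{i\neq t}q_i\notin P_t$. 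Now pick a real nonsingular point $x$ of $\cV(P_t)$ with $Q(x)\neq0$ — possible because $\cV(P_t)\not\subseteq\cV(Q)$ and the nonsingular points are dense — and note $q_i(x)\neq0$, hence $x\notin\cV(P_i)$, for $i\neq t$. Choose $p_1,\dots,p_{n-d_t}\in P_t$ whose gradients at $x$ are independent. Then each $Qp_j$ lies in $P_t\cap\bigcap_{i\neq t}P_i=J$, and since $p_j(x)=0$ we have $\nabla(Qp_j)(x)=Q(x)\nabla p_j(x)$; these $n-d_t$ vectors are independent, so $\rank_x J\ge n-d_t$ and hence $\rank J\ge n-d_t$.

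Combining the bounds, $n-\dim I\ge\rank J\ge n-\dim P_t$ forces $\dim P_t\ge\dim I$; but $I\subseteq P_t$ gives $\dim P_t\le\dim I$, so $\dim P_t=\dim I$ for all $t$, completing the argument as described. I expect the only real difficulty to be the lower bound $\rank J\ge n-\dim P_t$: one must actually manufacture elements of the vanishing ideal $\cI(\cV(I))$ with the right number of independent gradients, which is what forces one to treat the prime components one at a time, to invoke density of real nonsingular points of a real prime variety, and to clear the other components by multiplying through by $Q$. The rest is bookkeeping with the results already established in this section.
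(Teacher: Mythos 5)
The paper does not actually prove Lemma~\ref{DF3.9}: it is imported from Dubois--Efroymson with a citation, and the section uses it as a black box (notably in part (ii) of Theorem~\ref{realcond}). So there is no in-paper proof to compare against; what you have done is reconstruct a proof from the other material in the section, and your reconstruction is correct. The structure — pass to $J=\cI(\cV(I))$, note that $J$ is real and hence by Proposition~\ref{decomposition} an intersection of real primes $P_t$, get the upper bound $\rank J=\rank I\le n-\dim I$ from the corollary to Proposition~\ref{Ig}, and the lower bound $\rank J\ge n-\dim P_t$ by producing, at a real nonsingular point of $\cV(P_t)$ avoiding the other components, the elements $Qp_j\in J$ whose gradients collapse to $Q(x)\nabla p_j(x)$ — is sound, and the dimension pinch $\dim P_t=\dim I$ correctly forces $I=P_t$ for every $t$ since a proper inclusion of primes in $\R[x]$ drops dimension.

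One thing to clean up so that the argument is not circular within the paper's own logical ordering: you invoke Theorem~\ref{realcond} for the density of nonsingular real points, but Theorem~\ref{realcond}~(ii) is proved in this paper using Lemma~\ref{DF3.9}. Fortunately the appeal is unnecessary. All you need is a single real point of $\cV(P_t)$ that is nonsingular and satisfies $Q\neq 0$. Since $P_t$ is real, $\cI(\cV(P_t))=P_t$ is prime, so $\cV(P_t)$ is Zariski-irreducible in $\R^n$; the nonsingular locus is a nonempty (by Lemma~\ref{DF3.4}) Zariski-open subset, and $\{Q\neq 0\}\cap\cV(P_t)$ is a nonempty (since $Q\notin P_t=\cI(\cV(P_t))$) Zariski-open subset, hence the two intersect. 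That replaces the density claim by an elementary irreducibility argument and removes the dependence on Theorem~\ref{realcond}. With that adjustment the proof rests only on Lemmas~\ref{Iprime}, \ref{DF3.4}, Proposition~\ref{decomposition}, and the corollary to Proposition~\ref{Ig}, none of which uses Lemma~\ref{DF3.9}.
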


\begin{lemma}\label{topdim}
Assume that $I$ is prime in $\R[x]$ and $I'$ is prime in $\C[x]$, then
$\dim I+\rank I=n$ implies $\dim^{{\rm top}} \cV(I) = \dim I$.  
\end{lemma}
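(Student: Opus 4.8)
The plan is to prove the two inequalities $\dim^{\mathrm{top}}\cV(I)\ge\dim I$ and $\dim^{\mathrm{top}}\cV(I)\le\dim I$ separately, the first (and only nontrivial) one via the implicit function theorem together with the irreducibility of $I'$. Write $d=\dim I$. Since dimension is insensitive to field extension, $\dim I'=d$, and by the corollary to Proposition~\ref{Ig} we have $\rank I'=n-d$. Every real zero of $I$ is a complex zero of $I'$, and the Jacobian at such a point is a real matrix, so $\rank I\le\rank I'=n-d$ always holds; hence the hypothesis $\dim I+\rank I=n$ says precisely that the maximal Jacobian rank $n-d$ is already attained at some $x_0\in\cV(I)\subset\R^n$ (in particular $\cV(I)\neq\emptyset$).

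For $\dim^{\mathrm{top}}\cV(I)\ge d$ I would proceed as follows. Fix generators $h_1,\dots,h_m$ of $I$. By the choice of $x_0$, after relabelling we may assume that $h_1,\dots,h_r$ (with $r:=n-d$) have linearly independent gradients at $x_0$ and that $\det\frac{\partial(h_1,\dots,h_r)}{\partial(x_1,\dots,x_r)}(x_0)\neq0$; this determinant is real, hence nonzero over $\C$ as well. Over $\C$, since $\rank I'=n-d$ equals the codimension of the irreducible variety $\cV(I')$, the point $x_0$, at which the Jacobian of the generators attains this maximal rank, is a smooth point of $\cV(I')$. Thus near $x_0$ the $d$-dimensional complex manifold $\cV(I')$ is contained in the $d$-dimensional complex submanifold $\{z\mid h_1(z)=\cdots=h_r(z)=0\}$ cut out by the first $r$ generators alone, and on a connected neighbourhood of $x_0$ the two coincide as complex set germs. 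Intersecting this identity with $\R^n$ and applying the real implicit function theorem to $h_1,\dots,h_r$ (legitimate because the real Jacobian above is nonsingular), we get that $\cV(I)=\cV(I')\cap\R^n$ equals, near $x_0$, the smooth real manifold $\{x\mid h_1(x)=\cdots=h_r(x)=0\}$ of dimension $n-r=d$. Hence $\dim^{\mathrm{top}}\cV(I)\ge d$.

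The reverse inequality is the standard one: $\cV(I)=\cV(\cI(\cV(I)))$, and for a real algebraic set the topological dimension equals the Krull dimension of its coordinate ring $\R[x]/\cI(\cV(I))$ (see \cite{BCR98}); since $I\subseteq\cI(\cV(I))$, this Krull dimension is at most $\dim I=d$. Combining the two bounds gives $\dim^{\mathrm{top}}\cV(I)=\dim I$.

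I expect the main obstacle to be the local identification $\cV(I')=\{h_1=\cdots=h_r=0\}$ near $x_0$ — that is, verifying that enlarging the $r$ chosen equations to all of $I$ does not shrink the zero set in a neighbourhood of $x_0$. Over $\R$ this can genuinely fail (a real-analytic function may vanish on a proper, thin subset of a manifold), which is exactly why one passes to $\C$, where primeness of $I'$ forces the local zero set of the $r$ equations to be the entire $d$-dimensional component. The other point worth highlighting is that the hypothesis, read as $\rank I=\rank I'$, is precisely what supplies a \emph{real} smooth point of $\cV(I')$ at which the implicit function theorem can be run over $\R$; without a real point of maximal rank the conclusion fails, as the example $I=\langle x^2+y^2+z^2\rangle$ (for which $I'$ is prime but $\rank I=0<n-\dim I$) shows.
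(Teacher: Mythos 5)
Your proof is correct and takes essentially the same route as the paper: both select a real point $x^0$ at which the Jacobian of $n-d$ elements of $I$ attains rank $n-d$, apply the real implicit function theorem, and use primeness of $I'$ (irreducibility of $\cV(I')$) to force the local zero set of those $n-d$ equations to coincide with $\cV(I')$, hence with the real manifold $\cV(I)$ after intersecting with $\R^n$. The only cosmetic difference is that you spell out the easy upper bound $\dim^{\mathrm{top}}\cV(I)\le\dim I$, which the paper leaves implicit.
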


\begin{proof}
Set $d=\dim I$, then $\dim^{{\rm top}} \cV(I')$ is $d$.
Let $x^0$ be a point in $\cV(I)$ such that $\rank_{x^0} I= n-d.$   
Let $f_1,\dots,f_{n-d}\in I$ satisfy $\rank \<f_1,\dots,f_{n-d}\>=n-d$.
By a suitable reordering of the variables, the equations $f_i=0$ can be solved 
for the first $n-r$ variables as functions of the last $d$ variables in a
neighborhood of $x^0$. Let $u_1,\dots,u_{n-r}$ be such solution functions. 
We write $\tilde{x}=(x_{n-d+1},\dots,x_n)$, $u=(u_1,\dots,u_{n-d})$
and $\tilde{f}(\tilde{x})=f(u(\tilde{x}),\tilde{x})$ for $f\in \R[x]$.
Then $$x=(u(\tilde{x}),\tilde{x})$$
holds for all $\tilde{x}$ in a neighborhood of $\tilde{x}^0$.
Let $f$ belong to $I'$, 
then we have $\tilde{f}(\tilde{x})=0$ for all $\tilde{x}$,
unless otherwise
the dimension of the manifold $\cV(I')$ is less than $d$, which
is a contradiction.
Hence $\tilde{x}$ is a coordinate of the manifold 
$\cV(I')$ in a neighborhood of $x^0$. Moreover, since 
$f_1,\dots,f_{n-d}\in \R[x]$, $\tilde{x}$ is also a coordinate of 
the real manifold $\cV(I)$.
\end{proof}

\begin{theorem}
\label{realcond}
Let $I$ be a prime ideal in $\R[x]$\\
i) If $I$ is real, then $I'$ is prime and $\dim I+\rank I=n$. \\
ii) If $\dim I+\rank I=n$, then $I$ is real.
\end{theorem}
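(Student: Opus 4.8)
The plan is to settle (i) directly from the lemmas already proved and to reduce (ii) to the complex picture by combining Proposition~\ref{notprime} with Lemma~\ref{topdim}. Part (i) is immediate: if the prime ideal $I$ is real, then $I'=\C[x]I$ is prime in $\C[x]$ by Lemma~\ref{Iprime}, and $\dim I+\rank I=n$ by Lemma~\ref{DF3.4}; there is nothing further to check.

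For (ii) I assume $\dim I+\rank I=n$. First I would apply the contrapositive of Proposition~\ref{notprime}: since $I$ is prime and $\dim I+\rank I=n$, the ideal $I'=\C[x]I$ must be prime in $\C[x]$. As $\C$ is algebraically closed this gives $\dim I'=\dim I=:d$, makes $\cV(I')\subset\C^n$ an irreducible variety of dimension $d$, and yields $\cI(\cV(I'))=I'$. Next, $I'$ being prime and $\dim I+\rank I=n$, Lemma~\ref{topdim} applies and gives $\dim^{{\rm top}}\cV(I)=d$. It then remains to promote this equality of dimensions to the identity $\cI(\cV(I))=I$. I would argue that $\cV(I)\subset\R^n$ is Zariski-dense in $\cV(I')$: its Zariski closure $Z$ in $\C^n$ is a closed subset of the irreducible set $\cV(I')$, while the standard bound on real points of a complex variety gives $d=\dim^{{\rm top}}\cV(I)\le\dim Z\le d$, forcing $Z=\cV(I')$. (Note $\cV(I)\neq\emptyset$, since $d=\dim I\ge 0$ as $I$ is a proper prime ideal.) Consequently every $p\in\cI(\cV(I))$ vanishes on $Z=\cV(I')$, so $p\in\cI(\cV(I'))=I'$; and since $p\in\R[x]$ and $I'\cap\R[x]=I$ — the natural map $\R[x]/I\to\C[x]/I'$ being injective — we obtain $p\in I$. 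Hence $\cI(\cV(I))=I$, i.e.\ $I$ is real.

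The substance of the theorem has really been absorbed into Proposition~\ref{notprime} (supplying primeness of $I'$) and Lemma~\ref{topdim} (supplying $\dim^{{\rm top}}\cV(I)=\dim I$), so the main obstacle that remains is the comparatively soft last step of (ii): carefully justifying that $\dim^{{\rm top}}\cV(I)=\dim I$ forces Zariski-density of $\cV(I)$ in $\cV(I')$. This uses the standard fact that the real points of a complex variety of dimension $e$ have topological dimension at most $e$, together with irreducibility of $\cV(I')$ and the bookkeeping identities $\cI(\cV(I'))=I'$ and $I'\cap\R[x]=I$. A variant would instead invoke Lemma~\ref{DF3.9} and reduce to proving $\rank\cI(\cV(I))=\rank I=n-d$; but the inequality $\rank\cI(\cV(I))\le n-d$ requires exactly the same dimension comparison, so the two routes are essentially equivalent in difficulty.
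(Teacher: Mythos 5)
Your proof of part (i) is identical to the paper's (both quote Lemma~\ref{Iprime} and Lemma~\ref{DF3.4}). For part (ii), however, you take a genuinely different route: after deducing from the contrapositive of Proposition~\ref{notprime} that $I'$ is prime — which matches the paper — you invoke Lemma~\ref{topdim} to get $\dim^{\rm top}\cV(I)=d$ and then run a Zariski-density argument ($\cV(I)$ dense in the irreducible $\cV(I')$, hence $\cI(\cV(I))\subseteq I'\cap\R[x]=I$). The paper instead never leaves the implicit-function-theorem chart of Lemma~\ref{topdim}: it differentiates $\tilde f(\tilde x)\equiv 0$ to show the gradient of any $f$ vanishing on the local sheet lies in the span of $\nabla f_1,\dots,\nabla f_{n-d}$, concluding $\rank\cI(\cV(I))=n-d$, and finishes with Lemma~\ref{DF3.9}. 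Your version is closer in spirit to the paper's later proof of the ``if'' direction of Theorem~\ref{condtopdim} (which there relies on Weil's generic-point theorem); in effect you prove that stronger statement directly, at the cost of importing the external fact that the real locus of a complex variety of dimension $e$ has topological dimension at most $e$. The paper's gradient computation buys self-containedness; your route buys brevity and establishes $\dim^{\rm top}\cV(I)=\dim I\Rightarrow I$ real as a byproduct. One small flaw in your write-up: the parenthetical justification that $\cV(I)\neq\emptyset$ because $\dim I\geq 0$ is incorrect over $\R$ (take $I=\langle x^2+y^2+1\rangle$, which is prime, proper, of dimension $n-1\geq 0$ but has no real zeros); the correct reason is that the hypothesis $\rank I=n-d$ is a maximum of Jacobian ranks taken over $\cV(I)$, which is vacuous unless $\cV(I)\neq\emptyset$.
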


\begin{proof}\ \\
i)\ 
It follows from Lemma~\ref{Iprime} and Lemma~\ref{DF3.4}.\\
ii)\  From Proposition~\ref{notprime}, $I'$ is prime. 
We set $f_1,\dots,f_{n-d}$ and notations 
as in the proof of Lemma~\ref{topdim}.
Suppose $f(x)=0$ on $\cV \<f_1,\dots,f_{n-d}\>$.
The derivatives of $\tilde{f}(\tilde{x})$ are zero, so 
\begin{align*}
&0= \frac{\partial \tilde{f}}{\partial \tilde{x}_j}
= \sum_{m=1}^{n-d} \frac{\partial f}{\partial x_m}
\frac{\partial u_m}{\partial \tilde{x}_j}
+ \sum_{m=n-d+1}^{n}
\frac{\partial f}{\partial x_m}
\frac{\partial x_m}{\partial \tilde{x}_j},\\
\intertext{thus we have}
&\frac{\partial f}{\partial x_j}
=-\sum_{m=1}^{n-d} \frac{\partial f}{\partial x_m}
\frac{\partial u_m}{\partial \tilde{x}_j}\quad
(n-d+1\leq j\leq n),
\end{align*} 
and hence
\begin{align*}
\left(\frac{\partial f}{\partial x_1},\dots,
\frac{\partial f}{\partial x_n}\right)=&
\left(\frac{\partial f}{\partial x_1},\dots,
\frac{\partial f}{\partial x_{n-d}}\right) 
\left[\begin{array}{ccccccc}
1 &0 &\dots& 0& \frac{-\partial u_1}{\partial x_{x-d+1}} 
&\dots& \frac{-\partial u_1}{\partial x_n}\\
0& 1 &\dots& 0& \frac{-\partial u_2}{\partial x_{x-d+1}} 
&\dots& \frac{-\partial u_2}{\partial x_n}\\
\vdots&\vdots&&\vdots&\vdots&&\vdots\\
0& 0 &\dots& 1& \frac{-\partial u_{n-d}}{\partial x_{x-d+1}} 
&\dots& \frac{-\partial u_{n-d}}{\partial x_n}
\end{array}\right],
\end{align*}
which implies $\rank \cI(\cV(I)) = n-d$. From Lemma~\ref{DF3.9},
$I$ is real.
\end{proof}

\begin{theorem}
\label{condtopdim}
A prime ideal $I$ in $\R[x]$ is real if and only if $
\dim^{{\rm top}} \cV(I)=\dim I$.
\end{theorem}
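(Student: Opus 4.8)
The plan is to reduce this theorem to the two results already established, namely Theorem~\ref{realcond} and Lemma~\ref{topdim}, by arguing the contrapositive in one direction and invoking the implicit-function-theorem computation in the other. Write $d = \dim I$. For the ``only if'' direction, suppose $I$ is real. Then Theorem~\ref{realcond}(i) gives that $I'=\C[x]I$ is prime and $\dim I + \rank I = n$. Now Lemma~\ref{topdim} applies verbatim (its hypotheses are exactly ``$I$ prime in $\R[x]$, $I'$ prime in $\C[x]$, and $\dim I + \rank I = n$''), yielding $\dim^{\mathrm{top}}\cV(I) = \dim I$. So this direction is essentially immediate.

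For the ``if'' direction I would prove the contrapositive: assume $I$ is \emph{not} real and show $\dim^{\mathrm{top}}\cV(I) < d$ (note $\dim^{\mathrm{top}}\cV(I) \le \dim I$ always holds since $\cV(I) \subset \cV(I')$, and the latter has dimension $d$ as a complex variety, hence real topological dimension at most $d$; so ``$\ne$'' forces ``$<$''). By Theorem~\ref{realcond}(ii), since $I$ is not real we must have $\dim I + \rank I \ne n$, and since $\rank I \le n - d$ always (the Jacobian has $n-d$ rows after choosing $f_1,\dots,f_{n-d}$, or more precisely the corank is at least $d$ on any component of dimension $d$), this means $\rank I < n - d =: n - \dim I$, i.e. the Jacobian of the generators has rank strictly less than $n-d$ everywhere on $\cV(I)$. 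This says that $\cV(I)$, as a real algebraic set, is everywhere singular in the sense that no point is a smooth point of the expected dimension. The key step is then to conclude from ``$\rank_x I < n-d$ for all $x \in \cV(I)$'' that $\dim^{\mathrm{top}}\cV(I) < d$: at each point, by the implicit function theorem one can only solve the local equations for fewer than $n-d$ variables, so the real solution set near any point lies in a manifold of dimension $> d$ — but one needs that it is in fact \emph{contained} in a lower-dimensional real set, which follows because a smooth point of a real variety contained in the complex variety $\cV(I')$ of dimension $d$ at which $\rank < n-d$ cannot have a full $d$-dimensional neighborhood in $\cV(I)$ without forcing extra elements of $I'$ to vanish identically there, contradicting irreducibility of $I'$ (exactly the mechanism used in Lemma~\ref{topdim} and the rank lemma preceding Proposition~\ref{notprime}).

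The main obstacle I anticipate is making the last implication fully rigorous: the naive statement ``singular everywhere $\Rightarrow$ lower topological dimension'' is false for arbitrary real algebraic sets unless one exploits that $I$ is prime so that $I'$ is either prime or splits as $I'_g \cap I'_{\ol g}$ (Proposition~\ref{Ig}), giving $\cV(I) \subset \cV(I'_g) \cap \cV(I'_{\ol g})$ when $I'$ is not prime, and when $I'$ \emph{is} prime but $\rank I < n-d$ one uses Lemma~\ref{DF3.9} together with the Jacobian computation in the proof of Theorem~\ref{realcond}(ii) run in reverse. In practice the cleanest route may be: if $I'$ is not prime, cite Proposition~\ref{notprime} and the rank lemma to get $\dim^{\mathrm{top}}\cV(I) \le \dim(\cV(I'_g)\cap\cV(I'_{\ol g})) < d$; if $I'$ is prime, then $I$ not real forces $\rank I < n-d$ by Lemma~\ref{DF3.4}, and one shows directly that $\cV(I)$ then sits inside the zero set of a nonzero polynomial in $I'$ (a suitable maximal minor of the Jacobian, which is nonvanishing somewhere on $\cV(I')$ since $\rank I' = n-d$, but vanishes on all of $\cV(I)$), again cutting the topological dimension down. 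Assembling these two cases completes the argument.
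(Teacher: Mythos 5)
Your ``only if'' direction is the same as the paper's (Theorem~\ref{realcond}(i) plus Lemma~\ref{topdim}). For the ``if'' direction you take a genuinely different route. The paper argues directly: granting $\dim^{\mathrm{top}}\cV(I)=\dim I$, it first rules out $I'$ non-prime via the containment $\cV(I)\subset\cV(I_g')\cap\cV(I_{\overline g}')$, and then invokes a transcendence-degree/generic-point argument (Theorem~2, Chapter~IV of Weil's \emph{Foundations}): a point of $\cV(I)$ of transcendence degree $\dim I$ over a suitable subfield is a generic point of $\cV(I')$, so any real polynomial vanishing on $\cV(I)$ vanishes on $\cV(I')$ and thus lies in $I'\cap\R[x]=I$. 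You instead argue the contrapositive with a case split on primality of $I'$: when $I'$ is not prime you use Proposition~\ref{notprime} and the containment above; when $I'$ is prime you observe $\rank I<n-d$, pick an $(n-d)\times(n-d)$ Jacobian minor $h$ that is nonzero at some point of $\cV(I')$ (so $h\notin I'$) yet vanishes everywhere on $\cV(I)$, and conclude $\cV(I)\subset\cV(I'+\langle h\rangle)$, a variety of dimension $<d$. This avoids Weil's generic-point theorem entirely, reusing instead the Jacobian-rank machinery the paper already develops (at the cost of the background fact, also used implicitly by the paper, that the topological dimension of a real algebraic set is bounded by the Krull dimension of any ideal vanishing on it). Two small repairs: the deduction ``$I$ not real $\Rightarrow\rank I<n-d$'' follows from the contrapositive of Theorem~\ref{realcond}(ii) combined with $\rank I\le\rank I'=n-d$, not from Lemma~\ref{DF3.4} (which is the converse implication); and the phrase ``a nonzero polynomial in $I'$'' should read ``a polynomial \emph{not} in $I'$'' — the parenthetical that follows makes clear that is what you meant.
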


\begin{proof}
The "Only if" part follows from Theorem~\ref{realcond} i) 
and Lemma~\ref{topdim}.

We show the "if" part.
Suppose $\dim^{{\rm top}} \cV(I)=\dim I$.  
If $I'=\C[x]$ is not prime, then 
$\cV(I)$ is included in $\cV(I_g') \cap \cV(I_{\ol{g}}')$ 
as in the proof of Proposition~\ref{notprime}, and hence we have 
$\dim^{{\rm top}} \cV(I)< \dim I$. 
Thus $I'$ is prime in $\C[x]$. We show that if a polynomial 
$f\in\R[x]$ is identically zero on $\cV(I)$,
then $f=0$ on $\cV(I')$; which implies $f\in I'\cap \R[x]=I$
i.e. $I$ is real.

Let $f_1,\dots,f_k$ generate $I$ and 
denote by $\Q'$ the field obtained by the extension of $\Q$ 
by the coefficients of $f,f_1,\dots,f_k$.
There exists a point $x$ in $\cV(I) \subset \R^n$ whose
transcendence degree is $\dim I$ on $\Q'$. Such a point
is a generic point of $\cV(I')$ on $\Q'$,
from Theorem~2 of Chapter IV of \cite{Weil62}.   
Thus $f$ is identically zero on $\cV(I')$.   
\end{proof}

\begin{example}
(i) The ideal $I=\<xy\>$ is decomposed as $I=\<x\>\cap \<y\>$.
$\dim^{{\rm top}}\cV(\<x\>)=\dim \<x\>=1$ implies $\<x\>$ is real prime,
and similarly $\<y\>$ is real prime. Hence $I$ is real.\\
(ii) The ideal $I=\<y^2-xz, x^3-yz\>$ is decomposed as $I=J\cap \<x,y\>$,
where $J=\<y^2-xz, x^3-yz,x^2y-z^2 \>$. For each,
$\dim^{{\rm top}} \cV(J)=\dim\{(t^3,t^4,t^5)~;~ t\in \R\}=\dim J=1$ implies 
$J$ is real prime,
and $\dim^{{\rm top}} \cV(\<x,y\>)=\dim \<x,y\>=1$ implies $\<x,y\>$
 is real prime. Hence $I$ is real.
\end{example}

Now we return to the semialgebraic set $S$.
We recall that $S^{\circ}$ is the interior of 
a semialgebraic set $S$ in $\R^n$.

\begin{theorem}
\label{condition}
Suppose that $S^{\circ} \cap\cV(I)$ is nonempty and
that $I=I_1\cap \cdots \cap I_k$ is the prime decomposition
of $I$. Then the following conditions are equivalent.\\
(i)\ $\cI(S^\circ \cap \cV(I))=I$;\\
(ii) For any $t$ ($1\leq t\leq k$), there exists
$x^t$ in $\cV(I_t)\cap S^\circ$ such that $\rank_{x^t} I_t= n-\dim I_t$;\\
(iii)  For any $t$ ($1\leq t\leq k$), $\dim^{{\rm top}} 
\cV(I_t)\cap S^\circ = \dim I_t$ holds.  
\end{theorem}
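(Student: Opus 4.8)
The plan is to reduce the three‑way equivalence to the case of a single prime ideal and then to extract it from the reality criteria of Theorems~\ref{realcond} and~\ref{condtopdim} together with the local construction carried out in the proof of Lemma~\ref{topdim}. For the reduction, note that $S^\circ\cap\cV(I)=\bigcup_t\bigl(S^\circ\cap\cV(I_t)\bigr)$, so a polynomial vanishes on $S^\circ\cap\cV(I)$ precisely when it vanishes on every $S^\circ\cap\cV(I_t)$, whence $\cI(S^\circ\cap\cV(I))=\bigcap_t\cI(S^\circ\cap\cV(I_t))$; in particular $\cI(S^\circ\cap\cV(I_t))=I_t$ for all $t$ forces $\cI(S^\circ\cap\cV(I))=I$. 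Conversely, if $p\in\cI(S^\circ\cap\cV(I_t))\setminus I_t$ for some $t$, choose $\delta_t\in\bigcap_{s\ne t}I_s\setminus I_t$ (possible because the prime decomposition is irredundant): then $p\delta_t$ vanishes on $S^\circ\cap\cV(I_t)$ because $p$ does, and on $S^\circ\cap\cV(I_s)$ for $s\ne t$ because $\delta_t\in I_s$, hence on all of $S^\circ\cap\cV(I)$, whereas $p\delta_t\notin I_t$ by primality and therefore $p\delta_t\notin I$ --- so (i) fails. Thus (i) is equivalent to ``$\cI(S^\circ\cap\cV(I_t))=I_t$ for every $t$'', and as (ii) and (iii) are already componentwise it suffices to prove, for a prime $P\subset\R[x]$ with $d:=\dim P$, that (i$'$)~$\cI(S^\circ\cap\cV(P))=P$, (ii$'$)~some $x^0\in S^\circ\cap\cV(P)$ satisfies $\rank_{x^0}P=n-d$, and (iii$'$)~$\dim^{{\rm top}}(S^\circ\cap\cV(P))=d$ are equivalent. (The standing hypothesis that $S^\circ\cap\cV(I)$ be nonempty serves only to discard degenerate cases.)

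For (i$'$)$\Rightarrow$(iii$'$): from $S^\circ\cap\cV(P)\subset\cV(P)$ we get $P\subset\cI(\cV(P))\subset\cI(S^\circ\cap\cV(P))=P$, so $\cI(\cV(P))=P$, i.e. $P$ is real, whence $\dim^{{\rm top}}\cV(P)=d$ by Theorem~\ref{condtopdim}; but $\cV(P)=\cV(\cI(S^\circ\cap\cV(P)))$ is the Zariski closure of the semialgebraic set $S^\circ\cap\cV(P)$, which has the same topological dimension, and (iii$'$) follows. For (iii$'$)$\Rightarrow$(ii$'$): since the real dimension of a variety cannot exceed its complex dimension, $\dim^{{\rm top}}(S^\circ\cap\cV(P))\le\dim^{{\rm top}}\cV(P)\le d$, so with (iii$'$) we get $\dim^{{\rm top}}\cV(P)=d$; hence $P$ is real by Theorem~\ref{condtopdim}, $P'=\C[x]P$ is prime by part~i) of Theorem~\ref{realcond}, and the singular locus of the irreducible variety $\cV(P')$ is a proper subvariety of dimension $<d$, whose real points therefore have topological dimension $<d$. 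As $S^\circ\cap\cV(P)$ has topological dimension $d$ we may pick $x^0\in S^\circ\cap\cV(P)$ at a smooth point of $\cV(P')$, where the Zariski tangent space has dimension $d$ and hence the Jacobian of any generators of $P$ has rank $n-d$; that is $\rank_{x^0}P=n-d$.

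For (ii$'$)$\Rightarrow$(i$'$): for any prime $P$ one has $\rank P\le\rank\C[x]P=n-\dim\C[x]P=n-d$, so (ii$'$) yields $\dim P+\rank P=n$, and then $P$ is real and $P'$ prime by Theorem~\ref{realcond}. Exactly as in the proof of Lemma~\ref{topdim}, picking $f_1,\dots,f_{n-d}\in P$ with independent gradients at $x^0$ and solving $f_i=0$ for $n-d$ of the coordinates near $x^0$ identifies $\cV(P)$ near $x^0$ with a real-analytic graph $M=\{(u(\tilde x),\tilde x)\}$, which after shrinking lies in $S^\circ$. If $f\in\R[x]$ vanishes on $S^\circ\cap\cV(P)$ then $f|_M=0$; since $u$ extends holomorphically (implicit function theorem applied to the polynomials $f_i$), $\tilde f(\tilde x):=f(u(\tilde x),\tilde x)$ is holomorphic near $\tilde x^0\in\C^d$ and vanishes on a real neighbourhood of $\tilde x^0$, hence vanishes identically; so $f$ vanishes on a Euclidean-open piece of the irreducible variety $\cV(P')$, hence on all of it, giving $f\in\cI(\cV(P'))=P'$ and $f\in P'\cap\R[x]=P$. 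This is the inclusion $\cI(S^\circ\cap\cV(P))\subset P$; the reverse is trivial, so the cycle closes.

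The step I expect to be the main obstacle is this last one: inferring $f\in P$ from the mere fact that $f$ vanishes on the possibly tiny real patch $S^\circ\cap\cV(P)$. It rests on a $d$-dimensional real-analytic patch of $\cV(P)$ through a smooth point being Zariski dense in the $d$-dimensional complex irreducible variety $\cV(P')$ --- an identity-theorem phenomenon, and exactly the place where reality of $P$ (hence primality of $P'$) is indispensable. This is precisely the geometry already isolated in the proof of Lemma~\ref{topdim}, which is why I would invoke that proof rather than reprove it. By contrast the reduction to prime components is routine once one spots the multiplication-by-$\delta_t$ trick, and the equivalences among the rank and dimension conditions are then a mechanical application of Theorems~\ref{realcond} and~\ref{condtopdim}.
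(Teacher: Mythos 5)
Your proof is correct, and it rests on the same pillars the paper uses (Theorems~\ref{realcond} and~\ref{condtopdim}, the local implicit-function-theorem analysis from Lemma~\ref{topdim}, and the ``multiply through by something in the other primes'' trick). The organization is different, though, in two ways that are worth noting. First, you reduce cleanly to a single prime $P$ and then close the cycle in the order (i')$\Rightarrow$(iii')$\Rightarrow$(ii')$\Rightarrow$(i'), whereas the paper keeps the decomposition $I=\bigcap_t I_t$ in play throughout and runs the cycle (i)$\Rightarrow$(ii)$\Rightarrow$(iii)$\Rightarrow$(i); your $\delta_t$ argument is essentially the paper's $f=\prod_s f_s$ device, but moving it into a once-and-for-all reduction is cleaner. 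Second, and more substantively, your (ii')$\Rightarrow$(i') step --- extending $u$ holomorphically, applying the identity theorem to $\tilde f$, and then using irreducibility of $\cV(P')$ to pass from a Euclidean-open patch to all of $\cV(P')$, hence $f\in P'\cap\R[x]=P$ --- spells out exactly the Zariski-density fact that the paper's terse ``(iii)$\Rightarrow$(i): the assertion follows from Theorem~\ref{condtopdim} and Proposition~\ref{decomposition}'' leaves to the reader. Those two propositions only give $\cI(\cV(I))=I$; the extra step needed to promote this to $\cI(S^\circ\cap\cV(I))=I$ (that a $d$-dimensional real open patch of $\cV(I_t)$ is Zariski-dense in $\cV(I_t)$, using that $I_t$ is real prime and $I_t'$ is prime) is precisely what you supply. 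So your write-up is a valid proof, and at the one place where the published proof is elliptic it is more explicit.
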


\begin{proof}
(i) $\Rightarrow$ (ii)\ 
Suppose that there exists $t$ ($1\leq t \leq k$) such that
$\rank_{x^t} I_t< n-\dim I_t$ for any $x^t \in \cV(I_t)\cap S^\circ$.
From Theorem~\ref{realcond} (i), there exists $x^t$ in $\cV(I_t)$
such that $\rank_{x^t} +\dim I_t =n$, and hence
the set $\{x\in \cV(I_t) ~;~ \rank_x I_t \leq n-\dim I_t-1\}$  
is a proper subvarietiy of 
$\cV(I_t)$. Hence, there exists a polynomial $f_t$ 
identically zero on $\cV(I_t)\cap S^\circ$ and 
not identically zero on $\cV(I_t)$.
Thus $f_t \not\in I_t$. Set $f\in\R[x]$ as $f=\prod_{s=1}^kf_s$, where
$f_s\in I_s \setminus I_t$ for $s\neq t$. Then $f$ is identically zero
on $\cV(I)\cap S^{\circ}$ and $f\not\in I$, which is a contradiction.\\
(ii)  $\Rightarrow$ (iii)\ 
Suppose that there exists
$x^t$ in $\cV(I_t)\cap S^\circ$ such that $\rank_{x^t} I_t= n-\dim I_t$.
From the proof of Theorem~\ref{realcond}, there exists a neighborhood
$U_{x^t}$ of $x^t$ in $\R^n$ such that
$\dim^{{\rm top}} \cV(I_t) \cap U_{x^t} = \dim I_t$.\\
(iii) $\Rightarrow$ (i)
It is clear that $\dim^{{\rm top}} \cV(I_t)\cap S^\circ = \dim I_t$ implies
$\dim^{{\rm top}} \cV(I_t) = \dim I_t$. The assertion follows from
Theorem~\ref{condtopdim} and Proposition~\ref{decomposition}.
\end{proof}

We give a proof of Theorem~\ref{main_condition}.

\begin{proof}
[Proof of Theorem~\ref{main_condition}]
(i) $\Rightarrow$ (ii)\  
Suppose that $\dim^{{\rm top}} \cV(I_t)\cap S < \dim I_t$ for some $t$.
From Theorem~\ref{condtopdim}, $\dim^{{\rm top}}\cV(I_t)= \dim I_t$. 
Thus, $\cV(I_t)\cap S$ is included in some proper subvarieties 
of $\cV(I_t)$. 
Hence, there exists a polynomial $f_t$ 
such that $f_t=0$ on $\cV(I_t)\cap S$ 
and $f_t$ is not identically zero on $\cV(I_t)$.
Thus $f_t\not\in I_t$, which yields a contradiction similarly to the proof
of Theorem~\ref{condition} (i) $\Rightarrow$ (ii).\\
(ii) $\Rightarrow$ (i)\
Replace $\cV(I_t)\cap S^\circ$ by $\cV(I_t)\cap S$ in the proof of 
Theorem~\ref{condition} (iii) $\Rightarrow$ (i).
\end{proof}

\begin{remark}
The condition obtained by replacing $S^\circ$ by 
$S$ in (ii) of Theorem~\ref{condition}:
``For any $t$ ($1\leq t\leq k$), there exists
$x^t$ in $\cV(I_t)\cap S$ such that $\rank_{x^t} I_t= n-\dim I_t$.''
does not guarantee (i) of Theorem~\ref{main_condition}. 
Indeed, set $S=\{(x,y) ~;~ 
g(x,y)=1-x^2-(y-1)^2 \geq 0\}$ 
and $h(x,y)=y$, then $I=\<y\>$, the origin O is in $\cV(I)$ and 
$\rank_{\rm O} I=1=2-\dim I$. However 
$\cI(\cV(I)\cap S)=\cI({\rm O})=\<x,y\>$ is not included in $I$.
\end{remark}


\section{Algorithms for testing or guaranteeing the duality}

In this section 
we propose an algorithm to calculate generators of ideal $\cI(K)
= \cI(S\cap \cV(I))$.
Applying it,
one can obtain an equivalent problem to POP (\ref{pop}) 
such that the resulting problem satisfies Condition (ii) of 
Theorem~\ref{main_condition}. 
The algorithm uses a part of the cylindrical algebraic 
decomposition (CAD) 
 after G. E. Collins (see \cite{BPR06,Mishra93} and 
references therein for basic literature).

The following algorithm is for detecting
whether the condition of $\cI(K)=I$ holds or not.
Note that if this condition holds, $I$ itself should be real,
because $I\subset \cI(\cV(I)) \subset \cI(K)=I$.
We omit details of the CAD procedures, which are illustrated in the
examples below.

Let $g_1,\dots,g_\ell$ and $h_1,\dots,h_m$ be 
defining polynomials of the semialgebraic set $S$ and 
of generators of the ideal $I$ respectively, i.e.
$\{g_1,\dots,g_\ell\}=\{g_{i,j} ~;~ 1\leq i\leq s,~ 1\leq j \leq r_i\} $
in \eqref{semialgebraic}.   

\begin{algorithm}\ Input: $S$ and $I$.
\begin{enumerate}
\item Compute the primary decomposition of $I$,
$I=I_1\cap \dots\cap I_k$. If $\C[x]I_t$ is not prime in $\C[x]$ 
for some $t$, then $I$ is not real, otherwise, go to (2).

\item For each $I_t=\<p_1,\dots,p_s\>$ do:
\begin{enumerate}
\item Choose coordinates $\tilde{x}=(x_{i_1},\dots,x_{i_{d_t}})$, 
where $d_t=\dim I_t$,
such that $1\not\in \C(\tilde{x})I_j$.
Here $\C(\tilde{x})$ denotes the field extended by 
$\{x_{i_1},\dots,x_{i_{d_t}}\}$ from $\C$.

\item Let $P^n$ and $Q^n$ denote the set of 
polynomials $\{p_1,\dots,p_s\}$
and $\{p_1,\dots,p_s,g_1,\dots,g_\ell\}$ respectively.
Execute the projection of CAD for the polynomial sets
$P^n$ and $Q^n$
from $\R^n$ to $\R^1$, where $\tilde{x}\in \R^{d_t}$.
For $d_t\leq n' \leq n$ let $P^{n'}$ and $Q^{n'}$ denote 
the set of irreducible factors of resulting polynomials on $\R^{n'}$ from $P^n$
and from $Q^{n}$ respectively.
Also let $C^{n'}$ denote the set of cells in $\R^{n'}$ from $Q^n$.

\item 
For any open cell $U_s \in C^{d_t}$,
take a sample point $\tilde{x}_s \in U_s\subset \R^{d_t}$. 

\item Lift $\tilde{x}_s$ to the point where some polynomial
$p \in P^{d_t+1}$ is zero, i.e. $p(\tilde{x}_s, x_{i_{d_t+1}})=0$. 
Denote $x_s^{d_t+1}=(\tilde{x}_s, x_{i_{d_t+1}})$
($x_s^{d_t+1}$ can be more than one point). 

\item Iterate the above step to the top level. 
Condition (ii) of Theorem~\ref{main_condition} holds if and only if
there exists a point $\tilde{x}_s$ such that the point can be lifted
to a point $x_s^n \in \R^n$ belonging to $S$.
\end{enumerate}
\end{enumerate}
\end{algorithm}

\begin{remark}
One can compute the primary decomposition, 
by using computer algebra systems; e.g.\ Macaulay2, Singular, Risa/Asir.
\end{remark}
\begin{remark}
The condition $1\not\in \C(\tilde{x})I$ is equivalent to the condition that 
the topological dimension of projection of $\cV(I')\subset \C^n$ to 
$\C^d$ is $d$.
\end{remark}

\begin{example}
Set $h_1=y^2-xz$, $h_2=x^3-yz$, $h_3=x^2y-z^2$ and 
$g_1=1-(x-1)^2-(y-1)^2-(z-1)^2$. 
Applying Algorithm~4.1, we have the following:\\
(1) $I=\<h_1,h_2,h_3\>$ is prime and $\C[x]I$ is also prime.\\
(2) The dimension of $I$ is 1 and $1\not\in \C(x_1)I$.
The set of irreducible factors of the resultants and sub-resultants for $I$ 
with respect to $z$ is $P^2=\{x, y, x^4-y^3\}$.
That for $I$ and $g_1$ is 
\begin{align*}
Q^2=\{&x, y, x^4-y^3, 1-(x-1)^2-(1-y)^2,\\
&x^2(x-1)^2+x^2(y-1)^2+y^4-2xy^2,
y^2(x-1)^2+y^2(y-1)^2+x^6-2x^3y,\\
&\left|\begin{array}{cccc}
-1&0&x^2y&0\\
0&-1&0&x^2y\\
1&-2&(x-1)^+(y-1)^2&0\\
0&1&-2&(x-1)^2+(y-1)^2
\end{array}\right|\}
\end{align*}
(Fig.~\ref{ex4.1}).
The irreducible factors of the resultants and sub-resultants for $Q^2$
with respect to $y$ is approximately
$$\{x,x-2,x-0.522613, x-1.39169, x-0.714577, x-1.74196, {\rm etc}\}.$$
Let $x=0.6^{\frac{3}{4}}$ as a sample point of the interval 
$(0.5226, 0,714577) \in C^1$.
By lifting it to $xy$-plane by $P^2$, we obtain two points
$(0.6^{\frac{3}{4}},0), (0.6^{\frac{3}{4}},0.6^{\frac{4}{3}})$.
Further, by lifting it to $xyz$-plane by $I$, we obtain only one point
$(0.6^{\frac{3}{4}},0.6^{\frac{4}{3}}, 0.6^{\frac{5}{3}})$,
which satisfies $g_1\geq 0$. Thus, $\cI(K)=I$ holds.
\end{example}

\begin{figure}[htbp]
\begin{center}
\begin{picture}(200,200)(0,0)
\includegraphics*[scale=0.5]{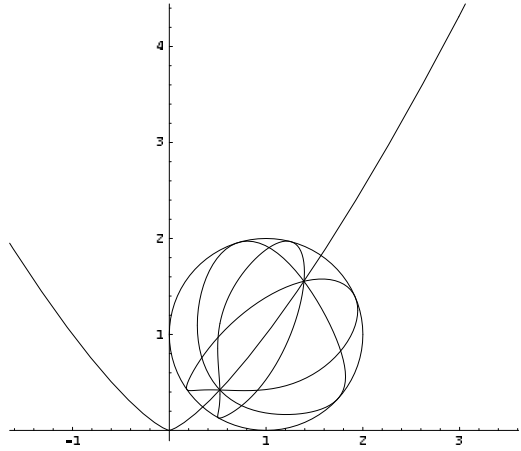}
\end{picture}
\caption{Real roots of each polynomial in $Q^2$ in Example~\ref{ex_4.1};
 The results of projection to $xy$-plane by the CAD}\label{ex4.1}
\end{center}
\end{figure}

Even if $\cI(K)\neq I$, we can generate new equality constraints for POP (\ref{pop}) by the following algorithm.

\begin{algorithm}\
Input: $S$ and $I$ such that $\cI(K)\neq I$. 
\begin{enumerate}
\item 
\begin{enumerate}
\item
Compute the primary decomposition of $I$,
$I=I_1\cap \dots\cap I_k$. 
\item
If $I_t$ is not prime in $\R[x]$,
replace $I_t$ by its associated prime $\sqrt{I_t}$. 
\item
If $I_t'=\C[x]I_t$ is not prime in $\C[x]$, 
$I_t'$ is decomposed as Proposition~\ref{Ig}.
Add $\Rep g_i , \Imp g_i$ to $I$ and 
go back to (a).
\end{enumerate}
\item If necessary, operate an invertible linear transformation on $\R^n$
so that $1\not\in \C(x_1,\dots,x_{d_t}) I_t$ holds for all $t$, 
where $d_t$ denotes $\dim I_t$.  

\item Let $t_0$ be one of the numbers satisfying 
$\cI(\cV(I_{t_0})\cap S)\neq I_{t_0}$ and
$\dim I_{t_0}=d_{\max}$, 
where $d_{\max} =\max \{\dim I_t ~;~ 1\leq t\leq k,
\cI(\cV(I_{t})\cap S)\neq I_t\}$.
Such $t_0$ can be calculated by Algorithm~4.1.

\item
For each $I_t$, if $\cI(\cV(I_t)\cap S)= I_t$ let $f^t$ be a polynomial
in $I_t\setminus I_{t_0}$, otherwise do for $I_t=\<p_1,\dots,p_s\>$:
\begin{enumerate}
\item Let $P^n$ and $Q^n$ denote the set of 
polynomials $\{p_1,\dots,p_s\}$
and $\{p_1,\dots,p_s,g_1,\dots,g_\ell\}$ respectively.
Execute the projection of CAD for the polynomial sets
$P^n$ and $Q^n$
from $\R^n$ to $\R^1$, eliminating  $x_n,\dots,x_2$ in this ordering.
For $d\leq n' \leq n$, let $P^{n'}$ and $Q^{n'}$ denote the set of 
irreducible factors of
resulting polynomials on $\R^{n'}$ from $P^n$ and from $Q^{n}$.
Also let $C^{n'}$ denote the set of cells in $\R^{n'}$ from $Q^n$.

\item 
Let $\{V_s\}$ denotes the set of cells in $\C^n$ 
such that $V_s$ is included in $\cV(I_t)\cap S$. 
By assumption $\cI(\cV(I_{t})\cap S)\neq I_t$, 
the dimension of $\Proj_{d_t} V_s$ (the projection of $V_s$ to $\R^{d_t}$) 
is less than $d_t$.
Define $f_s^t$ as one of the irreducible defining polynomials of 
$\Proj_{d_t} V_s$ in $Q^{d_t}$, and let $f^t$ be 
the least common multiple of $f_s^t$'s for all $s$. 
Since $1 \not\in \C(x_1,\dots,x_{d_t})I_{t'}$ for all 
$t'$ such that $d_{t'} \geq d_t$, while 
$1\in \C(x_1,\dots,x_{d_t})\<f^t(x_1,\dots,x_{d_t})\>$,
the polynomial $f^t$ is not included in $I_{t'}$
for all $t'$ such that $d_{t'} \geq d_t$.  
\end{enumerate}
\item Replace $I$ by $\<I_1,f^1\>\cap \dots \cap \<I_k,f^k\>$
($g_1,\dots,g_\ell$ are not changed)
and return to Algorithm~4.1.
\end{enumerate}
\end{algorithm}

By the following proposition, the resulting ideal 
is strictly larger than $I$. By Noether property, these procedures
terminate in finite steps.

\begin{proposition}
If $\cI(K)\neq I$, then
$\<I_1,f^1\>\cap \dots \cap \<I_k,f^k\>$ in Algorithm~4.2 
is strictly larger than $I$. 
\end{proposition}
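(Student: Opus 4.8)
The plan is to exhibit a single polynomial $F$ lying in $\bigcap_{t=1}^{k}\langle I_t,f^t\rangle$ but not in $I$. Since $\langle I_t,f^t\rangle\supseteq I_t$ for every $t$, the inclusion $\bigcap_t\langle I_t,f^t\rangle\supseteq\bigcap_t I_t=I$ is automatic, so only strictness needs work; and because $I=I_1\cap\dots\cap I_k$, it suffices to produce $F\in\bigcap_t\langle I_t,f^t\rangle$ with $F\notin I_{t_0}$, where $t_0$ is the index chosen in step (3) of Algorithm~4.2. (Such a $t_0$ exists because $\cI(K)=\bigcap_t\cI(S\cap\cV(I_t))$, so $\cI(K)\neq I=\bigcap_t I_t$ forces $\cI(S\cap\cV(I_t))\neq I_t$ for at least one $t$.)

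The candidate is $F:=\prod_{t=1}^{k}f^t$. For each fixed $s$ the factor $f^s$ divides $F$, so $F\in\langle f^s\rangle\subseteq\langle I_s,f^s\rangle$; hence $F\in\bigcap_s\langle I_s,f^s\rangle$ with no effort. As $I_{t_0}$ is a proper prime ideal, $F\notin I_{t_0}$ will follow once I check that $f^s\notin I_{t_0}$ for every $s=1,\dots,k$, so the whole proof reduces to this claim.

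To verify $f^s\notin I_{t_0}$ I split into the two cases of step (4) of Algorithm~4.2. If $\cI(\cV(I_s)\cap S)=I_s$, then $f^s$ was by construction chosen in $I_s\setminus I_{t_0}$, hence $f^s\notin I_{t_0}$; this choice is legitimate because $I_s$ and $I_{t_0}$ are distinct minimal primes of the now-radical ideal $I$ and are therefore incomparable. If instead $\cI(\cV(I_s)\cap S)\neq I_s$, then $d_s=\dim I_s\le d_{\max}=\dim I_{t_0}$ by the definition of $d_{\max}$; thus $t_0$ is one of the indices $t'$ with $d_{t'}\ge d_s$, and property (b) of step (4) (derived from $1\in\C(x_1,\dots,x_{d_s})\langle f^s\rangle$ while $1\notin\C(x_1,\dots,x_{d_s})I_{t'}$) gives precisely $f^s\notin I_{t_0}$. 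This covers every $s$, in particular $s=t_0$ itself, which falls in the second case; combining with primality of $I_{t_0}$ gives $F\notin I_{t_0}$ and finishes the argument.

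The step that demands the most care, and where the design of the algorithm is genuinely used, is the case $\cI(\cV(I_s)\cap S)\neq I_s$: one must align the component $I_s$ currently being processed with the fixed component $I_{t_0}$ in the role of $t'$ in step (4)(b), and this requires the dimension inequality $d_{t_0}\ge d_s$. That inequality holds only because step (3) selects $t_0$ of \emph{maximal} dimension among the components $I_t$ with $\cI(\cV(I_t)\cap S)\neq I_t$; were $t_0$ of smaller dimension, a component $I_s$ of larger dimension could satisfy $f^s\in I_{t_0}$ and the product $F$ could collapse into $I$. The remaining points --- incomparability of the minimal primes of $I$ in the first case, and properness of $I_{t_0}$ so that primality applies to the product $F$ --- are routine.
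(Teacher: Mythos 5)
Your proof is correct and follows essentially the same line as the paper's: you take $F=\prod_t f^t$, observe it trivially lies in each $\langle I_t,f^t\rangle$, and show $F\notin I_{t_0}$ by the same two-case analysis (the case $\cI(\cV(I_s)\cap S)=I_s$ handled by the choice $f^s\in I_s\setminus I_{t_0}$, and the case $\cI(\cV(I_s)\cap S)\neq I_s$ handled via step (4)(b) together with the maximality of $d_{t_0}$). The only cosmetic difference is that you argue directly ($f^s\notin I_{t_0}$ for all $s$, then invoke primality) whereas the paper argues by contradiction, and you make explicit the routine inclusion $F\in\bigcap_t\langle I_t,f^t\rangle$ and the incomparability of distinct minimal primes, which the paper leaves implicit.
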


\begin{proof}
We show $f=\prod_t f^t \not\in I_{t_0}$, 
which implies $f \not\in I=I_1 \cap \dots \cap I_k$.
Suppose $f \in I_{t_0}$, then since $I_{t_0}$ is prime, $f^t$
belongs to $I_{t_0}$ for some $t$. 
If $\cI(\cV(I_t)\cap S)=I_t$, then by the beginning of (4), 
$f^t$ does
not belong to $I_{t_0}$. Thus, we have $\cI(\cV(I_t)\cap S)\neq I_t$.
Hence by (4.b), $f^t$ does not belong to $I_{t'}$ for all $t'$ satisfying 
$d_{t'} \geq d_t$, while we have supposed $f^t\in I_{t_0}$. 
Thus we have $d_{t_0} < d_t$, which contradicts 
$\cI(\cV(I_t)\cap S)\neq I_t$ by the definition of $t_0$ in (3). 
\end{proof}

\begin{example}
\label{ex_4.1}
Set $h_1=(x^2+y^2+z^2)(z-2)$ and 
$g_1=1-x^2-(z-1)^2$. Applying Algorithm~4.2, we have the following.\\
(1) $I=I_1\cap I_2$, where $I_1=\<x^2+y^2+z^2\>$ and $I_2=\<z-2\>$, 
is the prime decomposition and $\C[x]I_1$ and $\C[x]I_2$ are prime.\\
(2) Both dimensions of $I_1$ and $I_2$ are 2 and $1\not\in \C(x,y)I$.
The set of irreducible factors of the resultants and sub-resultants for $I_1$
and $g_1$ with respect to $z$ is
$Q_1^2=\{x+1,x-1,x^2+y^2,4x^2+4y^2+y^4\}$.
The irreducible factors of the resultants and sub-resultants for $Q_1^2$
with respect to $y$ is
$\{x,x-1,x+1\}$, thus we have 
$$C_1^1=\{(-\infty,-1),-1,(-1,0),0,(0,1),1,(1,\infty)\}.$$
By lifting $C_1^1$  to $xy$-plane by $Q_1^2$, we obtain
\begin{align*}
C_1^2=\{& \{x<-1 \},\{x=-1\},\{-1< x< 0\},
\{x=0, y<0\}, \{x=0, y=0\} \\
&\{x=0, y>0\}, \{0<x<1\},\{x=1\},
\{x>1\} \}.
\end{align*}
Further, by lifting it to $xyz$-plane by $P^3=I_1$, we obtain only one point
$U_1^2=(0,0,0)$ satisfying $g_1\geq 0$. 
We take defining polynomial $f^1$ of $(0,0)$ from $Q^2$,
e.g. $f^1=x^2+y^2$.

Similarly, from $I_2$ and $g_1$ we obtain
$Q_2^1=Q_2^2=\{x,x+1,x-1\}$. By lifting them to $xyz$-plane by $P^3=I_2$, 
we obtain $\{x<0, z=2\}, \{x=0, z=2\},\{x>0, z=2\}$,
among which only $U_2^2=\{x=0, z=2\}$ satisfy $g_1\geq 0$.
We take defining polynomial $f^2$ of $\{x=0\} \subset \R^2$ from $Q^2$,
i.e. $f^2=x$.

We replace $I$ by 
$$I=I_1\cap I_2=\<x^2+y^2+z^2,x^2+y^2\> \cap \<z-2,x\>
=
\<x^2+y^2,z^2\> \cap \<x,z-2\>
$$
(this does not satisfy $\cI(K)=I$ yet).
From $x^2+y^2=(x+y\I)(x-y\I)$, we should add
$x,y$ to $I_1$. Thus we should replace $I$ by
$$I=
\<x,y,z\> \cap \<x,z-2\>=
\<x,y(z-2),z(z-2)\>.$$
Now, if we set
$h_1=x,h_2=y(z-2),h_3=z(z-2)$ and $g_1=1-x^2-(z-1)^2\geq 0$,
then $\cI(K)=I$ is satisfied. 
\end{example}

\subsection*{Acknowledgement}
Y.~T and T.~T. appreciate the financial support from the Faculty of Marin 
Technology, Tokyo University of Marine Sciences and Technology.
They are also supported by the Japan 
Society for the Promotion of Science as Grand-in-Aid for Young Scientists (B). 
H.~ W was supported by Grant-in-Aid for JSPS Fellows 20003236.

\end{document}